\newcommand\blfootnote[1]{%
  \begingroup
  \renewcommand\thefootnote{}\footnote{#1}%
  \addtocounter{footnote}{-1}%
  \endgroup
}
\long\def\/*#1*/{}
\newsavebox\myboxA
\newsavebox\myboxB
\newlength\mylenA
\newcommand*\xoverline[2][0.75]{%
    \sbox{\myboxA}{$\m@th#2$}%
    \setbox\myboxB\null
    \ht\myboxB=\ht\myboxA%
    \dp\myboxB=\dp\myboxA%
    \wd\myboxB=#1\wd\myboxA
    \sbox\myboxB{$\m@th\overline{\copy\myboxB}$}
    \setlength\mylenA{\the\wd\myboxA}
    \addtolength\mylenA{-\the\wd\myboxB}%
    \ifdim\wd\myboxB<\wd\myboxA%
       \rlap{\hskip 0.5\mylenA\usebox\myboxB}{\usebox\myboxA}%
    \else
        \hskip -0.5\mylenA\rlap{\usebox\myboxA}{\hskip 0.5\mylenA\usebox\myboxB}%
    \fi}
\numberwithin{equation}{section}
\numberwithin{equation}{section}
\newcommand\textlcsc[1]{\textsc{\MakeLowercase{#1}}}
\def\sss{\scriptscriptstyle}
\newcommand{\floor}[1]{\ensuremath{\left\lfloor #1 \right\rfloor}}
\newcommand{\ind}[1]{\ensuremath{\mathbbm{1}{\left\{#1\right\}}}}
\newcommand{\red}[1]{{\color{red}#1}}
\newcommand{\PR}{\ensuremath{\mathbbm{P}}}
\newcommand{\E}{\ensuremath{\mathbbm{E}}}
\newcommand{\R}{\ensuremath{\mathbbm{R}}}
\newcommand{\N}{\ensuremath{\mathbbm{N}}}
\newcommand{\e}{\ensuremath{\mathrm{e}}}
\newcommand{\blue}[1]{{\color{blue}#1}}
\newcommand{\dif}{\mathrm{d}}
\newcommand{\bld}[1]{\boldsymbol{#1}}
\newcommand{\cE}{\mathcal{E}}
\newcommand{\cG}{\mathcal{G}}
\newcommand{\cX}{\mathcal{X}}
\newcommand{\T}{\mathcal{T}}
\newcommand{\bt}{\boldsymbol{t}}
\newcommand{\bs}{\boldsymbol{s}}
\newcommand{\dcut}{d_{\sss \Box}}
\newcommand{\cA}{\mathcal{A}}
\newcommand{\sW}{\mathscr{W}}
\newcommand{\ldp}{\textlcsc{LDP}}
\newcommand{\bd}{\bld{d}}
\newcommand{\tA}{\tilde{A}}
\newcommand{\tF}{\tilde{F}}
\newcommand{\tW}{\tilde{W}}
\newcommand{\tg}{\tilde{g}}
\newcommand{\tU}{\tilde{U}}
\newcommand{\tsW}{\tilde{\sW}}
\newcommand{\delcut}{\delta_{\sss \Box}}
\newcommand{\tPRd}{\tilde{\PR}_{n,d}}
\newcommand{\PRD}{\PR_{n,\bld{d}}}
\newcommand{\tPRD}{\tilde{\PR}_{n,\bld{d}}}
\newcommand{\tPRW}{\tilde{\PR}_{n,W_0^n}}
\newcommand{\bB}{\mathbb{B}_{\sss \Box}}
\newcommand{\rB}{\mathrm{B}_{\sss \Box}}
\newcommand{\tbB}{\tilde{\mathbb{B}}_{\sss \Box}}
\newcommand{\sM}{\mathscr{M}}
\newcommand{\degree}{\mathrm{deg}}
\newcommand{\sD}{\mathscr{D}}
\newtheorem{theorem}{Theorem}
\newtheorem*{claim*}{Claim}
\newtheorem{lemma}[theorem]{Lemma}
\newtheorem{proposition}[theorem]{Proposition}
\newtheorem{corollary}[theorem]{Corollary}
\newtheorem{assumption}{Assumption}
\newtheorem{remark}{Remark}
\newtheorem{fact}{Fact}
\newtheorem{defn}{Definition}
\let\plainqed\qedsymbol
\numberwithin{equation}{section}
\numberwithin{theorem}{section}
\title{Large deviation for uniform graphs with given degrees}
\author{Souvik Dhara$^{\dagger,\ddagger}$, Subhabrata Sen$^{\star}$ } 
\affil{$^\dagger$ Department of Mathematics, Massachusetts Institute of Technology, \\
$^\ddagger$ Microsoft Research, New England \\
$^\star$ Department of Statistics, Harvard University }
\date{\today}
\begin{document}

\maketitle
\begin{abstract}
 Consider the random graph sampled uniformly  from the set of all simple graphs with a  given degree sequence. Under mild conditions on the degrees, we establish a Large Deviation Principle ($\ldp$) for these random graphs, viewed as   elements of the graphon space. As a corollary of our result, we obtain $\ldp$s for functionals continuous with respect to the cut metric, and obtain an asymptotic enumeration formula for graphs with given degrees, subject to an additional constraint on the value of a continuous functional.   
Our assumptions on the degrees are identical to those of Chatterjee, Diaconis and Sly (2011), who derived the almost sure graphon limit for these random graphs. 
\end{abstract}
\blfootnote{Emails: \href{mailto:sdhara@mit.edu}{sdhara@mit.edu}, \href{mailto:subhabratasen@fas.harvard.edu}{subhabratasen@fas.harvard.edu}}
\blfootnote{2010 Mathematics Subject Classification. 60510, 05C80, 60C05}
\blfootnote{Keywords: Large deviation, graphons, uniform random graphs, degree contraints}
\section{Introduction}
In a seminal paper, Chatterjee and Varadhan \cite{CV11} initiated a study of large deviations for random graphs, and introduced a novel framework that   synergizes the classical theory of Large Deviations with the theory of dense graph limits (\citet{L2012}). 
 They embedded Erd\H{o}s-R\'enyi random graphs into the space of graphons, equipped with the cut-metric, and derived an $\ldp$ for the corresponding sequence of probability measures. 
As an important consequence, this
yields $\ldp$s for continuous functionals in the cut-metric topology, e.g. subgraph counts, largest eigenvalue, etc.
 Their result resolved a long-standing open question regarding large-deviations for sub-graph counts of dense  Erd\H{o}s-R\'enyi random graphs. This area has witnessed rapid developments subsequently
 --- 
we refer the interested reader to Chatterjee's Saint-Flour lecture notes~\cite{Cha17} for a detailed history of these problems and an elaborate description of recent breakthroughs.

 Numerous scientific applications naturally motivate the study of graphs with topological constraints, such as a fixed number of edges, triangles etc (see e.g. \cite{GKTB10,YW09,Orsini15}).
The desire to understand typical properties of constrained graphs motivates the study of random graphs, sampled uniformly, subject to these constraints. 
Natural examples include the Erd\H{o}s-R\'enyi uniform random graph with a constrained number of  edges, random regular graphs \cite{JLR00}, etc.
In statistical physics parlance, these  can be thought of as  microcanonical ensembles, whereas   unconstrained graphs, like Erd\H{o}s-R\'enyi, correspond to canonical ensembles \cite{HMRS18,SMHG15}. A rigorous study of constrained graphs often turns out to be extremely challenging--- in fact, even enumerating the total number of graphs, subject to combinatorial constraints, is  exceedingly non-trivial, and has attracted significant attention recently in Probability, Combinatorics, and Statistical Physics (see e.g. \cite{RS13,BH13,HLM19,Rad18,Yin15,RY13,KY17}).  The study of large deviations in this context is of natural interest -- indeed, this has deep, natural connections to the problem of counting graphs with atypical properties, subject to the topological constraints. Recently, Dembo and Lubetzky \cite{DL18} initiated a study of large deviations for constrained random graphs, and derived an $\ldp$ for  dense Erd\H{o}s-R\'enyi uniform random graphs, conditioned to have a fixed number of edges.

 In this paper, we study the uniform random graph with a  given degree sequence. The degrees are assumed to scale linearly in the number of vertices, so that we have a dense random graph. 
 Such random graphs are used extensively in Physics \cite{SMHG15} and Statistics \cite{BD11}, and have a rich history in Combinatorics \cite{B80,Wor99,BH13}. 
In general, this model is intractable to theoretical analysis.  In fact, characterizing the first order  asymptotics of simple functionals like triangle counts is  challenging in this case. In a breakthrough paper, 
Chatterjee, Diaconis and Sly \cite{CDS11} derived that, under fairly mild  conditions (see Assumption~\ref{assmp:degree}), these random graphs converge almost surely in the cut-metric, and identified the limit. 
Our main result, Theorem~\ref{thm:ldp-given-degree}, establishes an $\ldp$ for uniform random graphs under  identical conditions as \cite{CDS11}.  
This general theorem has two important corollaries. 
The first corollary (Corollary~\ref{cor:ldp-cont-func}) yields $\ldp$s for continuous functionals such as subgraph counts.
The second corollary (Corollary~\ref{cor:partition-function}) yields the convergence of the microcanonical partition function.
Further, it provides the asymptotic count of graphs with given degrees, subject to an additional constraint on the value of a continuous functional, in terms of a variational formula. 

Conceptually, the problem under consideration is significantly more challenging than the Erd\H{o}s-R\'{e}nyi case, due to the absence of edge-independence in these models. Further, in sharp contrast to the setting of \citet{DL18}, the number of degree constraints grows linearly with the number of vertices in the graph. To overcome this issue, we crucially exploit a deep idea put forth in \cite{CDS11}--- these random graphs may be sampled using appropriate inhomogeneous random graphs, conditioned to have the desired degrees (see Section~\ref{sec:CDS-facts}, and in particular \eqref{eq:graphon-l1-conv}). Unfortunately, even with access to this ingredient, one still faces substantial technical obstacles due to the inhomogeneity of the unconstrained model. Our proofs require a very delicate understanding of the cut-topology, and deviate significantly from the established techniques for the dense Erd\H{o}s-R\'{e}nyi model. To the best of our knowledge, this is the first instance where an $\ldp$ has been derived in inhomogeneous settings. Finally, we remark that requisite analytic properties of the candidate rate function, such as lower-semicontinuity, are not obvious here, and require careful analysis.

The rest of the paper is organised as follows: 
In Section~\ref{sec:defns},
we set up the framework necessary to state our main result. 
The statement of the main result and its corollaries is provided in Section~\ref{sec:main-results}.
In Section~\ref{sec:discussion}, we  discuss the relevant literature and collect some  open problems surfacing from our work.  
Section~\ref{sec:prelim} derives important analytic properties of the rate function.
In Section~\ref{sec:irg}, we prove a large deviation upper bound for inhomogeneous random graphs.
The proof of Theorem~\ref{thm:ldp-given-degree} is completed in Section~\ref{sec:degrees}.
Finally, we prove Corollaries~\ref{cor:ldp-cont-func}~and~\ref{cor:partition-function}  in Section~\ref{sec:proof-cor}.

\subsection{Definitions and concepts}\label{sec:defns}
\subsubsection{Graphons and the cut metric}\label{sec:defns-1}
A graphon is a measurable function $W:[0,1]^2\mapsto [0,1]$ that is symmetric, i.e., $W(x,y)=W(y,x)$ for all $x,y\in [0,1]$. 
To define the cut metric, let $\sM$ denote the set of all bijective, Lebesgue measure preserving maps $\phi:[0,1]\mapsto [0,1]$. 
The cut distance between two graphons $W_1$ and $W_2$ is given by 
\begin{eq}\label{eq:def-cut-distnace}
\dcut(W_1,W_2) = \sup_{S,T\subset [0,1]} \bigg|\int_{S\times T} \big(W_1(x,y) - W_{2}(x,y)\big) \dif x\dif y\bigg|,
\end{eq}
and the cut metric is given by 
\begin{eq}\label{eq:def-cut-metric}
\delcut (W_1,W_2)= \inf_{\phi\in \sM} \dcut (W_1,W_2^\phi)
\end{eq}
where $W^\phi (x,y) = W(\phi(x),\phi(y))$. See \cite[Lemma 3.5]{BCLSV08} for equivalent definitions of the cut metric.
Setting $\sW$ to denote the space of all graphons,
define the equivalence relation $W_1\sim W_2$ if $\delcut(W_1,W_2)=0$, and consider the quotient space $\tsW = \sW/ _{\sim} $. 
By \cite[Corollary 8.14]{L2012}, $W_1 \sim W_2$ if and only if $W_1^\phi = W_2^\psi$ for measure preserving transformations $\phi,\psi$.
Also, note that  $(\tsW,\delcut)$ is a compact metric space \cite[Theorem 9.23]{L2012}.
Henceforth, for any $W\in \sW$,  we always write $\tW$ to denote the equivalence class of $W$ in $\tsW$. 

\begin{defn}[Empirical graphon] \normalfont
For a graph $G_n$ with vertex set $[n] = \{1,\dots,n\}$ and edge set $E(G_n)$, the empirical graphon $W^{G_n}$ is given by 
\begin{eq}
W^{G_n}(x,y) = \begin{cases}
1 \qquad \text{ if }  (i,j)\in E(G_n),\ (x,y) \in \big[\frac{i-1}{n},\frac{i}{n}\big) \times \big[\frac{j-1}{n},\frac{j}{n}\big),\\
0 \qquad \text{ otherwise.}
\end{cases}
\end{eq}
\end{defn}
\begin{defn}[Graph Convergence] \normalfont
$(G_n)_{n\geq 1}$ is said to converge in $(\tsW,\delcut)$ if their empirical graphons converge.
\end{defn}
\begin{defn}[Subgraph densities] \label{defn:subgraph-dinsity} \normalfont
For a finite simple graph $H = (V(H),E(H))$ with $V(H) = [k]$, the subgraph density of $H$ in $W$ is defined as
\begin{eq}
t(H,W):= \int_{[0,1]^k} \prod_{(i,j)\in E(H)} W(x_i,x_j) \prod_{i=1}^k \dif x_i.
\end{eq}
Note that $t(H,W^\phi) = t(H,W^\psi)$ for measure preserving transformations $\phi,\psi$, and thus $t(H,\cdot)$ is well defined on $\tsW$. 
We write $t(H,\tW)$ to denote the subgraph density of $\tW\in\tsW$.
Also, \cite[Theorem 3.7]{BCLSV08} shows that $t(H,\cdot)$ is Lipschitz continuous on $(\tsW,\delcut)$ for any finite simple graph $H$.
\end{defn}

\begin{defn}[Degree distribution function] \normalfont
For any $W \in \sW$, the degree distribution function is defined by 
\begin{eq}
\degree_{W} (\lambda) = \Lambda\Big\{ x: \int_0^1W(x,y) \dif y \leq \lambda \Big\}, 
\end{eq}
where $\Lambda$ denotes the Lebesgue measure on $[0,1]$, and $\lambda \in[0,1]$. 
Observe that $\degree_W$ is well-defined on~$\tsW$. 
We write $\degree_{\tW}$ to denote the degree distribution function of $\tW \in \tsW$. 
\end{defn}

\begin{defn}[Graphons away from boundary]\label{defn:away-boundary} \normalfont
A graphon $W$ is said to be {\it away from boundary}  if there exists an $\eta>0$ such that $\eta < W(x,y)<1-\eta$. 
A sequence $(W_n)_{n\geq 1}$ is said to be away from boundary if for all $n\geq 1$, the above holds for some $\eta >0$ (independent of $n$).
\end{defn}
\subsubsection{Uniform graphs with given degrees} 
Consider a sequence of degree sequences $(\bld{d}^n)_{n\geq 1}$, $\bld{d}^n = (d_i^n)_{i\in [n]}$. 
Without loss of generality, we will assume that the degree sequence is non-increasing, i.e., $d_1^n\geq \dots \geq d_n^n$. For clarity of notation, we will simply write $d_i^n= d_i$, and $\bld{d}^n = \bld{d}$, and suppress the dependence of the degrees on $n$. 
Let $G_{n,\bd}$ denote the uniformly chosen random graph with degree sequence $\bld{d}$.

Of course, not all sequences $\bld{d}$ are valid degree sequences of simple graphs.
Such sequences are called graphical, and they are characterized by the celebrated Erd\H{o}s-Gallai theorem \cite{erdos1960graphen}. 
This theorem establishes that 
 $\bld{d}$ is graphical if and only if $\sum_{i\in [n]} d_i$ is even and for all $k\in [n]$ 
\begin{eq}\label{eq:erdos-gallai}
\sum_{i=1}^k d_i \leq k(k-1) + \sum_{i=k+1}^n \min\{d_i,k\}. 
\end{eq}
Thus $G_{n,\bd}$ is defined whenever  \eqref{eq:erdos-gallai} holds. 
Chatterjee, Diaconis and Sly \cite{CDS11} obtained the graphon limit of $G_{n,\bd}$ when the degrees converge, and the degree sequence lies in the interior of  an asymptotic  Erd\H{o}s-Gallai boundary  \eqref{eq:erdos-gallai}.
We state below the precise assumptions from~\cite{CDS11}, which will also be the underlying assumption for our large deviation result:
\begin{assumption} \label{assmp:degree} The degree sequence $\bld{d}^n$ satisfies the following:
\begin{enumerate}[(1)]
    \item There exists a non-increasing function $D:[0,1]\mapsto [0,1]$ such that 
    \begin{eq}
    \lim_{n\to\infty} \bigg(\Big|\frac{d_1}{n} -D(0)\Big|+\Big|\frac{d_n}{n} -D(1)\Big| +\frac{1}{n} \sum_{i=1}^{n}\Big|\frac{d_i}{n} -D\Big(\frac{i}{n}\Big)\Big|\bigg) = 0.
    \end{eq}
    \item There exist constants $0<c_1<c_2<1$ such that,$\forall x\in [0,1]$, $c_1\leq D(x) \leq c_2$, and 
    \begin{eq}
    \int_x^1 \min\{D(y),x\} \dif y + x^2 -\int_0^x D(y) \dif y >0.
    \end{eq}
\end{enumerate}
\end{assumption}
We write $\PRD$ to denote the probability measure on $\sW$ associated to the empirical graphon of $G_{n,\bd}$, and write $\tPRD$ to denote the corresponding push forward measure on $(\tsW,\delcut)$.  
The following was proved in \cite[Theorem 1.1]{CDS11}:
\begin{proposition}[{\cite[Theorem 1.1]{CDS11}}] \label{prop:graphon-limit}
Under {\rm Assumption~\ref{assmp:degree}}, almost surely $\bigotimes_{n\geq 1} \tPRD$, $(G_{n,\bd})_{n\geq 1}$ converges to the graphon $W_D$ in the cut-metric, as $n\to\infty$, where $W_{D}$ is given by 
\begin{eq}\label{limit:gen-degree}
W_D(x,y):= \frac{\e^{\beta(x)+\beta(y)}}{1+\e^{\beta(x)+\beta(y)}},
\end{eq}
where $\beta:[0,1]\mapsto \R$ is the unique function satisfying
$D(x) = \int_0^1\frac{\e^{\beta(x)+\beta(y)}}{1+\e^{\beta(x)+\beta(y)}} \dif y$, for all $x\in [0,1]$. 
\end{proposition}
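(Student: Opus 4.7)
My approach follows a maximum-entropy strategy. The plan has three components: (a) characterize $\beta$ variationally, (b) realize $G_{n,\bd}$ as a conditioned inhomogeneous Erd\H{o}s--R\'enyi graph $\tilde G_n$, and (c) combine concentration of $\tilde G_n$ in the cut metric with a local central limit lower bound on the conditioning probability. For (a), I would consider the strictly concave entropy functional
\begin{equation*}
I(W) = -\int_{[0,1]^2}\!\big[W\log W + (1-W)\log(1-W)\big]\,\dif x\,\dif y
\end{equation*}
on the convex set of graphons with prescribed degree function $D$. Assumption~\ref{assmp:degree}(2) ensures this set is non-empty and has non-empty interior in a suitable sense, while strict concavity delivers a unique maximizer. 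Lagrangian duality in $L^\infty([0,1])$ produces a multiplier $\beta$, and the stationarity condition forces the maximizer to take the exponential form $W_D$ of \eqref{limit:gen-degree}. The strict inequality in Assumption~\ref{assmp:degree}(2) is precisely what prevents $\beta$ from blowing up to $\pm\infty$ on subsets of positive measure.

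For (b), define $\tilde G_n$ on $[n]$ by including each edge $\{i,j\}$ independently with probability $p_{ij}^{(n)} = \e^{\beta(i/n)+\beta(j/n)}/(1+\e^{\beta(i/n)+\beta(j/n)})$. For any simple graph $H$ with degree sequence $\bld{d}$,
\begin{equation*}
\PR(\tilde G_n = H) \;=\; \frac{\prod_i \e^{\beta(i/n)\,d_i}}{\prod_{i<j}\big(1+\e^{\beta(i/n)+\beta(j/n)}\big)},
\end{equation*}
which depends on $H$ only through its degree sequence. Consequently, conditional on the event $\mathcal{A}_n := \{\degree_{\tilde G_n}(i)=d_i \text{ for all } i\in[n]\}$, the law of $\tilde G_n$ coincides exactly with that of $G_{n,\bd}$.

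For (c), a bounded-differences martingale argument applied to the cut norm (as in \cite{CV11}) yields, for every $\varepsilon>0$,
\begin{equation*}
\PR\big(\delcut(W^{\tilde G_n},W_D)>\varepsilon\big) \;\leq\; \e^{-c(\varepsilon)\,n^2},
\end{equation*}
after verifying that $\E[W^{\tilde G_n}] \to W_D$ in cut norm via the $L^1$ convergence of $d_i/n$ to $D(i/n)$ from Assumption~\ref{assmp:degree}(1). In parallel, a multivariate local CLT for the $n$-dimensional degree vector of $\tilde G_n$ (whose coordinates are sums of independent Bernoullis bounded in $(\eta,1-\eta)$) gives
\begin{equation*}
\PR(\mathcal{A}_n) \;\geq\; \e^{-Cn\log n}.
\end{equation*}
Dividing the two bounds produces an upper estimate $\e^{-c(\varepsilon)n^2+Cn\log n}$ for $\PR(\delcut(W^{G_{n,\bd}},W_D)>\varepsilon)$, which is summable in $n$, so Borel--Cantelli under $\bigotimes_{n\geq 1}\tPRD$ promotes convergence in probability to almost-sure convergence.

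The main obstacle is the local CLT lower bound on $\PR(\mathcal{A}_n)$. One must control the joint local behaviour of $n$ correlated integer-valued sums (distinct degrees share edges), with a target $\bld{d}$ that drifts with $n$. The cleanest route is Fourier inversion combined with a quadratic lower bound on the eigenvalues of the degree-vector covariance, the latter requiring that $p_{ij}^{(n)}$ lie uniformly in $(\eta,1-\eta)$; making the error terms in the local CLT uniform across the moving target $\bld{d}$ demands careful bookkeeping. A secondary obstacle is Step (a): proving that a bounded Lagrange multiplier $\beta$ exists genuinely uses that $D$ lies in the interior of attainable degree profiles of $[0,1]$-valued graphons, and the equivalence of this interior condition with the strict Erd\H{o}s--Gallai inequality of Assumption~\ref{assmp:degree}(2) is itself non-trivial.
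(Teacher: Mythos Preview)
The paper does not prove Proposition~\ref{prop:graphon-limit}; it is quoted from \cite{CDS11}, and only the ingredients needed later are recalled in Section~\ref{sec:CDS-facts}. Your strategy is exactly the \cite{CDS11} strategy the paper invokes: realize $G_{n,\bd}$ as an exponential-family inhomogeneous random graph conditioned on its degree sequence, combine $\e^{-cn^2}$ concentration of the unconditioned model in cut metric with a sub-$\e^{-cn^2}$ lower bound on the conditioning event, and apply Borel--Cantelli.

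The one substantive deviation is your choice of edge probabilities. You take $p_{ij}^{(n)}=\e^{\beta(i/n)+\beta(j/n)}/(1+\e^{\beta(i/n)+\beta(j/n)})$ using the \emph{limiting} $\beta$, whereas \cite{CDS11} (see \eqref{eq:system-of-eqns}--\eqref{beta-model-probab}) solve the finite system for $\hat{\bld{\beta}}=(\hat\beta_i)_{i\in[n]}$ so that $\sum_{j\neq i}\hat p_{ij}=d_i$ exactly. With exact centering, Lemma~\ref{lem:CDS-counting} (their Lemma~6.2) delivers $\PR(\mathcal{A}_n)\geq \e^{-n^{7/4}}$ without any local CLT. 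Under your choice the expected degrees miss $d_i$ by $o(n)$ (Assumption~\ref{assmp:degree} only gives $L^1$ and endpoint control of $d_i/n-D(i/n)$), so your multivariate local CLT has to absorb drifts that may be many standard deviations; one can still extract an $\e^{-o(n^2)}$ lower bound, which suffices, but the ``careful bookkeeping'' you flag is genuinely needed and is entirely avoided by solving the discrete system first. For step~(a), the paper likewise defers to \cite{CDS11}, whose Lemma~4.1 gives existence and uniform boundedness of $\hat{\bld{\beta}}$; the link between this and the interior Erd\H{o}s--Gallai condition in Assumption~\ref{assmp:degree}(2) is established there, not in the present paper.
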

Note that, under Assumption~\ref{assmp:degree}, $\|\beta\|_{\infty}<\infty$, which follows using \cite[Lemma 4.1]{CDS11}. Thus, $W_D$ is away from the boundary for any degree function $D$ satisfying Assumption~\ref{assmp:degree} in the sense of Definition~\ref{defn:away-boundary}. 

\subsection{Main results}\label{sec:main-results}
Our main result, Theorem~\ref{thm:ldp-given-degree}, stated below, derives a $\ldp$ for the sequence of probability measures $\tPRD$. To this end, for the convenience of the reader, we start with recalling the formal notion of a large deviation principle ($\ldp$). 
Let $\cX$ be a Polish space with Borel sigma-algebra $\mathscr{B}$. Let $I:\cX \to [0, \infty]$ be a lower semi-continuous function. 
A sequence of probability measures $(\PR_n)_{n \geq 1}$ on $(\cX, \mathscr{B})$  satisfies a large deviation principle ($\ldp$) with speed $s_n\nearrow \infty$ and good rate function $I$ if 
\begin{enumerate}
    \item[(i)] for all $\alpha\geq 0$, the level sets $\{x:I(x)\leq \alpha\}$ are compact,  
    \item[(ii)] for any closed set $F\subset \cX$ and open set $U\subset \cX$
\begin{eq}
\limsup_{n\to\infty} \frac{1}{s_n}\log\PR_n(F) \leq -\inf_{x\in F}I(x) \quad \text{and}\quad \liminf_{n\to\infty} \frac{1}{s_n}\log\PR_n(U) \geq -\inf_{x\in U}I(x).
\end{eq}
\end{enumerate}
Next, we introduce the candidate rate function in our context. 
For $W, W_0 \in \sW$ with $0<W_0<1$ a.s., we define 
\begin{eq} \label{eq:I-W0-W}
I_{W_0}(W) &= \frac{1}{2} \int_{[0,1]^2} \bigg[W(x,y) \log\Big(\frac{W(x,y)}{W_0(x,y)}\Big)+(1-W(x,y)) \log\Big(\frac{1-W(x,y)}{1-W_0(x,y)}\Big)\bigg] \dif x \dif y \\
& = \frac{1}{2} \sup_{a} \int_{[0,1]^2} \Big[a(x,y) W(x,y) - \log\big(W_0(x,y)\e^{a(x,y)}+1 - W_0(x,y)\big) \Big]\dif x \dif y  ,
\end{eq}where the supremum over $a$ in the final term ranges over all functions in $L^2([0,1]^2)$ satisfying $a(x,y)= a(y,x)$ for all $(x,y)\in [0,1]^2$  (for a proof of this variational characterization, see \cite[Lemma 5.2]{Cha17}). 
When $W$ takes values $0$ or $1$, we use the convention that $x\log x = 0$ to define the integrand in the first equality of \eqref{eq:I-W0-W}.
Unlike the rate function for the Erd\H{o}s-R\'enyi random graph in \cite[(7)]{CV11}, the function $I_{W_0}(\cdot)$ is not well-defined on the quotient space $\tsW$, i.e., $I_{W_0}(W_1)$ is not necessarily equal to $I_{W_0}(W_2)$, for $W_1\sim W_2$. 
To produce a valid candidate, we use the notion of a lower semi-continuous envelope.
Let $\bB(\tW,\eta) := \{g: \delcut(\tW,\tg) \leq \eta\}$, and define 
\begin{eq}\label{eq:rate-IRG}
J_{W_0} (\tW) = \sup_{\eta >0} \inf_{g \in \bB(\tW,\eta)} I_{W_0}(g).
\end{eq}
Note that $J_{W_0}$ is well-defined on $\tsW$.
Further, $J_{W_0}$ is lower semi-continuous on $(\tsW,\delcut)$ (see Lemma~\ref{lem:rate-lower-semicont}), i.e., the lower level sets $\{\tW: J_{W_{0}}(\tW)\leq \alpha\}$ are closed, and therefore compact due to the compactness of $(\tsW,\delcut)$. 
Thus, $J_{W_0}$ is a good rate function.

Next recall the definition of $W_D$ from \eqref{limit:gen-degree}.
The degree distribution function of $W_D$ is the inverse of $D$, i.e.,
\begin{eq}\label{lim-deg-dist}
\mu_D ([0,\lambda]) = \degree_{\tW_D}(\lambda) = \Lambda\{x: D(x) \leq \lambda\}.
\end{eq}
Define 
\begin{eq}\label{rate-constrained}
J_D(\tW) = 
\begin{cases}
J_{W_D}(\tW) &\quad \text{if }\degree_{\tW} (\lambda) = \mu_D([0,\lambda]),\   \forall\, \lambda\in [0,1], \\
\infty &\quad \text{otherwise}.
\end{cases}
\end{eq}
Using \cite[Theorem 2.16]{BCCG15} (see also \eqref{bound-prokhorov-metric} below), 
$\{\tW: \degree_{\tW} (\lambda) = \mu_D([0,\lambda]),\   \forall\, \lambda\in [0,1]\}$ is closed in $(\tsW,\delcut)$--- this establishes that
 $J_D$ is also a good rate function. Given this candidate rate function, we state our main result.
\begin{theorem} \label{thm:ldp-given-degree}
Under {\rm Assumption~\ref{assmp:degree}}, the sequence of probability measures $(\tPRD)_{n\geq 1}$ on $(\tsW, \delcut)$ satisfies a $\ldp$ with speed $n^2$ and  good rate function $J_D$ defined in \eqref{rate-constrained}. 
\end{theorem}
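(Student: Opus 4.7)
The plan is to transfer the large deviation principle from an appropriate inhomogeneous random graph (IRG) to $G_{n,\bd}$ via the conditional sampling representation of Chatterjee, Diaconis and Sly. Specifically, there is a symmetric array $P^n=(p_{ij}^n)$ (constructed via the CDS fixed-point equation, so that $P^n$ is a discretization of $W_D$ from \eqref{limit:gen-degree}) whose associated IRG $\tilde{G}_n$ satisfies $G_{n,\bd}\stackrel{d}{=}\tilde{G}_n\mid \deg(\tilde{G}_n)=\bd$, and whose empirical graphon approximates $W_D$ in the cut metric. For any Borel set $A\subset\tsW$,
$$\tPRD(A)=\frac{\tilde{\PR}(W^{\tilde{G}_n}\in A,\ \deg(\tilde{G}_n)=\bd)}{\tilde{\PR}(\deg(\tilde{G}_n)=\bd)},$$
and the aim is to estimate numerator and denominator up to $\e^{o(n^2)}$ factors.

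For the upper bound, I would first note that the denominator $\tilde{\PR}(\deg(\tilde{G}_n)=\bd)$ is $\e^{-o(n^2)}$: under Assumption~\ref{assmp:degree}, $\tilde{G}_n$ concentrates around a mean degree profile matching $\bd/n$, so the event of exactly hitting $\bd$ is at worst polynomially small in $n$, as is implicit in \cite{CDS11}. Upper-bounding the numerator by $\tilde{\PR}(W^{\tilde{G}_n}\in A)$ and invoking the IRG large deviation upper bound from Section~\ref{sec:irg} then yields
$$\limsup_{n\to\infty}\frac{1}{n^2}\log\tPRD(A)\leq -\inf_{W\in A} J_{W_D}(W).$$
To get $J_D$ in place of $J_{W_D}$, the point is that the conditioning forces the empirical degree distribution to match $\mu_D$ exactly; passing to the cut-metric limit, one shows that any $\tW$ appearing as a limit point of such empirical graphons must satisfy $\degree_{\tW}(\lambda)=\mu_D([0,\lambda])$, so restricting the infimum to this set (i.e.\ replacing $J_{W_D}$ by $J_D$) is lossless.

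For the lower bound, fix $\tW$ with $J_D(\tW)<\infty$, equivalently $\degree_{\tW}=\mu_D([0,\cdot])$, and an open neighborhood $U$ of $\tW$. The idea is a tilting argument: choose a representative $W'$ of $\tW$, build a tilted IRG with edge probabilities derived from $W'$ whose empirical graphon lies in $U$ with probability $1-o(1)$ and whose Radon--Nikodym derivative with respect to $\tilde{G}_n$, when restricted to graphs with degree $\bd$, is $\e^{-n^2 I_{W_D}(W')+o(n^2)}$. Optimizing $W'$ over the equivalence class of $\tW$ and using the identity \eqref{eq:rate-IRG} gives the exponent $J_{W_D}(\tW)=J_D(\tW)$. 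Since the degree-match normalization is again $\e^{o(n^2)}$, the matching lower bound follows, which combined with compactness of $(\tsW,\delcut)$ upgrades the weak upper bound to an LDP.

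The hardest step will be controlling the interplay between the cut-metric topology and the hard degree constraint. Because $I_{W_0}$ is not a function on $\tsW$ and the kernel $W_D$ has genuinely non-constant row sums, one must be careful when (a) showing that $\tilde{\PR}(\deg(\tilde{G}_n)=\bd)$ is not super-exponentially small under the IRG, and (b) constructing tilted graphons that simultaneously approximate $\tW$ in cut metric and produce the prescribed empirical degree sequence, so that the tilting cost indeed matches $I_{W_D}(W')$. The regularity afforded by $W_D$ being away from the boundary in the sense of Definition~\ref{defn:away-boundary}, together with the analytic properties of $J_{W_0}$ established in Section~\ref{sec:prelim} (in particular the equality $J_{W_0}(\tW)=\inf_{\phi\in\sM}I_{W_0}(W^\phi)=\inf_{\phi\in\sM}I_{W_0^\phi}(W)$ from \eqref{eq:rate-IRG}), will be crucial for both: they allow one to move between representatives of $\tW$ while controlling the entropy cost and while preserving the target degree profile.
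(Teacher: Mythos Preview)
Your proposal is essentially the paper's own strategy: the conditional representation $\PR_{n,\bld d}(\cdot)=\PR_{n,W_{n,\bld d}}(\cdot\mid \sW_0^n)$ from \cite{CDS11}, the IRG upper bound of Section~\ref{sec:irg} for the numerator, the CDS degree-hitting estimate for the denominator, and a tilting/change-of-measure argument for the lower bound; you also correctly flag the construction of a piecewise-constant approximant with \emph{exact} row sums $d_i$ (the paper's Lemma~\ref{lem:construction-step-function}) as the key technical ingredient.

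Two small corrections worth noting. First, the degree-matching probability is not ``polynomially small'': under $\PR_{n,W_{n,\bld d}}$ (and more generally under any tilted IRG whose rows sum to $d_i$) it is only known to be $\geq \e^{-n^{7/4}}$ via \cite[Lemma~6.2]{CDS11}; this is still $\e^{-o(n^2)}$, so your argument survives, but the stronger claim is unjustified. Second, in the lower bound it is not enough that the tilted IRG lands in $U$ with probability $1-o(1)$ unconditionally; you need this to hold \emph{conditionally} on $\{\deg=\bld d\}$, which is where the paper combines the $\e^{-n^{7/4}}$ bound with an Azuma concentration for subgraph densities (Lemma~\ref{lem:conditional-convergence}). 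With those two points tightened, your outline matches the paper's proof.
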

For the particular case of a random $d$-regular graph, Assumption~\ref{assmp:degree} holds when $d=\lfloor n p \rfloor$ for some $p\in (0,1)$ (see \cite[Remark 3]{CDS11}), and thus Proposition~\ref{prop:graphon-limit} and Theorem~\ref{thm:ldp-given-degree} are applicable. In this case, $W_{D}= p$, and we will show that the $\ldp$ rate function simplifies (see Lemma~\ref{lem:two-rate-functions-equal} for a proof).
Define 
\begin{eq}\label{eq:rate-regular}
J_p(\tW) = 
\begin{cases}I_p(W) \quad \text{if }\degree_{\tW} (\lambda) = \ind{p \leq \lambda}, \   \forall \lambda\in [0,1],\\
\infty \quad \text{otherwise}.
\end{cases}
\end{eq}
Note that $I_p(W_1) = I_p(W_2)$ for any $W_1\sim W_2$, thus $I_p$ is well-defined on $(\tsW,\delcut)$.
The following corollary states the corresponding $\ldp$ for the random regular graph. Let $p\in (0,1)$ and $d= \floor{np}$. Consider the degree sequence $\bld{d} = d\bld{1}$, and for this case simply denote the probability measure associated to the random regular graph by $\tPRd$. 
\begin{corollary}\label{thm:ldp-regular}
The sequence of probability measures $(\tPRd)_{n\geq 1}$ on $(\tsW, \delcut)$ satisfies a $\ldp$ with speed $n^2$ and good rate function $J_p$ defined in \eqref{eq:rate-regular}. 
\end{corollary}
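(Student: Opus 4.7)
The plan is to deduce Corollary~\ref{thm:ldp-regular} as a direct specialization of Theorem~\ref{thm:ldp-given-degree} to the constant degree profile $D \equiv p$, so the task reduces to (i) verifying Assumption~\ref{assmp:degree} and (ii) identifying the rate function $J_D$ with $J_p$ from \eqref{eq:rate-regular}.

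For (i), taking $\bld{d} = d \bld{1}$ with $d = \floor{np}$, the $L^1$-type convergence in Assumption~\ref{assmp:degree}(1) follows from $d/n \to p$, while in Assumption~\ref{assmp:degree}(2) the uniform bounds hold for any $0 < c_1 < p < c_2 < 1$, and the integral inequality reduces to the elementary computation
\begin{equation*}
(1-x)\min\{p,x\} + x^2 - px \;=\; \begin{cases} x(1-p) & \text{if } x \leq p, \\ (x-p)^2 + p(1-p) & \text{if } x > p, \end{cases}
\end{equation*}
which is strictly positive on $(0, 1]$.

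For (ii), the first step is to observe that the unique $\beta$ provided by Proposition~\ref{prop:graphon-limit} corresponding to $D \equiv p$ must be the constant $\beta \equiv \frac{1}{2}\log(p/(1-p))$, since this constant choice solves the defining fixed-point equation and the solution is unique; \eqref{limit:gen-degree} then gives $W_D \equiv p$. Consequently, $\mu_D([0,\lambda]) = \Lambda\{x : p \leq \lambda\} = \ind{p \leq \lambda}$, so the degree constraint in \eqref{rate-constrained} matches the constraint in \eqref{eq:rate-regular}. Second, I must show $J_{W_D}(\tW) = I_p(W)$: since $W_D \equiv p$ is a constant graphon, $W_D^\phi = W_D$ for every $\phi \in \sM$, and the measure-preserving property of $\phi$ combined with the change-of-variables formula in \eqref{eq:I-W0-W} gives $I_p(W^\phi) = I_p(W)$ for every $\phi \in \sM$. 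Invoking the representation $J_{W_D}(\tW) = \inf_{\phi \in \sM} I_{W_D}(W^\phi)$ from \eqref{eq:rate-IRG} then yields $J_{W_D}(\tW) = I_p(W)$, and combined with the degree-constraint match this establishes $J_D = J_p$.

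I do not anticipate any substantial technical obstacle. The only minor subtlety is invoking the uniqueness statement in Proposition~\ref{prop:graphon-limit} to rule out non-constant $\beta$ producing the same $D \equiv p$; once this is settled, the entire argument is a bookkeeping exercise verifying that the general formalism of Theorem~\ref{thm:ldp-given-degree} collapses, under a constant degree profile, to the Chatterjee--Varadhan rate function $I_p$ restricted to graphons whose marginal degrees equal $p$ almost everywhere.
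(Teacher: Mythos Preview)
Your proposal is correct and follows essentially the same route as the paper: the text preceding the corollary already records that Assumption~\ref{assmp:degree} holds for $d=\lfloor np\rfloor$ (citing \cite[Remark~3]{CDS11}), that $W_D\equiv p$, and that $I_p(W^\phi)=I_p(W)$ forces $J_{W_D}(\tW)=I_p(W)$, so Theorem~\ref{thm:ldp-given-degree} immediately yields the claim. You have simply supplied the explicit verification of Assumption~\ref{assmp:degree}(2) and of the identification $\beta\equiv\tfrac12\log\frac{p}{1-p}$ that the paper leaves to the cited reference.
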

As the main application of their $\ldp$, 
Chatterjee and Varadhan \cite{CV11} derived the $\ldp$s for subgraph counts of Erd\H{o}s-R\'enyi random graphs. 
Under the constraint on the number of edges, Dembo and Lubetzky \cite{DL18} also proved $\ldp$ results for subgraph counts. Below we state the corresponding results for~$G_{n,\bd}$.

Let $\tau: \tsW \mapsto \R$ be bounded and continuous with respect to $\delcut$. The $\ldp$ statement for~$\tau$ below will directly imply the $\ldp$ for subgraph counts of $G_{n,\bd}$, using the continuity of subgraph densities. 
Define the rate function
\begin{gather}
\phi_\tau(D,r) = \inf\{J_D(\tW): \tau (\tW) \geq r\}, \label{eq:var-formula-1} 
\end{gather}
Also, denote $\tsW_0=\{\tW\in \tsW: \degree_{\tW} \equiv \mu_D\}$ and  
\begin{eq}
l_\tau(D) = \tau (\tW_D), \quad r_{\tau}(D)=\sup\{r: \{\tW\in \tsW_0:\tau(\tW)\geq r\}\neq \varnothing\}.
\end{eq}
Let $\tau_{n,\bd}$ be the value of $\tau$ computed on the empirical graphon of $G_{n,\bd}$. Below we state the $\ldp$ result for $\tau_{n,\bd}$:
\begin{corollary}\label{cor:ldp-cont-func}
Let $\tau$ be a  bounded, continuous function on $(\tsW,\delcut)$. 
Then the following are true:
\begin{enumerate}[(1)]
    \item The function $\phi_{\tau}(D,\cdot)$ is left continuous, zero on $[0,l_\tau(D)]$, and finite, strictly positive on $(l_\tau(D),r_\tau(D)]$. 
    \item Let $r$ be any right continuity point of $\phi_{\tau}(D,\cdot)$. Then, under {\rm Assumption~\ref{assmp:degree}}, 
    \begin{eq}
    \lim_{n\to\infty} \frac{1}{n^2}\log \PR (\tau_{n,\bd}\geq r) = - \phi_{\tau}(D,r). 
    \end{eq}
    \item Let $F_{\star,r}$ be the set of minimizers in~\eqref{eq:var-formula-1}. Under {\rm Assumption~\ref{assmp:degree}}, for all $\varepsilon >0$, there exists $C = C(\varepsilon,\tau,D,r)>0$ such that 
    \begin{eq}
    \limsup_{n\to\infty} \frac{1}{n^2} \log  \PR\big(\delcut(W^{\sss G_{n,\bd}}, F_{\star,r}) \geq \varepsilon \big\vert \tau_{n,\bd}\geq r \big) \leq -C. \label{eq:conditional_graphon}
    \end{eq}
\end{enumerate}
\end{corollary}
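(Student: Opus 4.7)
The plan is to deduce each part from Theorem~\ref{thm:ldp-given-degree} together with continuity of $\tau$, following the blueprint of \cite{CV11,DL18}. Two analytic ingredients beyond the LDP itself are crucial: (i) compactness of $(\tsW, \delcut)$ combined with lower semi-continuity of $J_D$ (Lemma~\ref{lem:rate-lower-semicont}), which ensures infima are attained and level sets are compact; and (ii) strict convexity of the map $u \mapsto u \log(u/w) + (1-u)\log((1-u)/(1-w))$ on $[0,1]$ for each fixed $w \in (0,1)$, applicable to $W_D$ pointwise since Assumption~\ref{assmp:degree} keeps $W_D$ uniformly away from $\{0,1\}$. Given (i), part (1) is largely immediate: $\phi_\tau(D, \cdot) \equiv 0$ on $[0, l_\tau(D)]$ since $W_D$ is the unique zero of $J_D$ (as $I_{W_D}(W) = 0$ forces $W = W_D$ a.e., and $W_D \in \tsW_0$), and left-continuity at $r$ follows by selecting minimizers $\tW_k$ of $\phi_\tau(D, r_k)$ for $r_k \uparrow r$, extracting a convergent subsequence $\tW_{k_j} \to \tW^*$ by compactness, then combining $\tau(\tW^*) \geq r$ (continuity) with $J_D(\tW^*) \leq \liminf_j J_D(\tW_{k_j})$ (lsc) to pin down $\phi_\tau(D, r) = \lim_k \phi_\tau(D, r_k)$.

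Strict monotonicity on $(l_\tau(D), r_\tau(D)]$ and the minimizer identification in (2) both reduce to a convex combination argument. For $l_\tau(D) < r_1 < r_2$, let $\tW^\circ$ minimize $\phi_\tau(D, r_2)$ and pick a sorted representative $W^\circ$ (so that $\int_0^1 W^\circ(x,y)\,\mathrm{d}y = D(x)$ for every $x$) that is also nearly optimal: $I_{W_D}(W^\circ) \leq J_D(\tW^\circ) + \eta$ for a prescribed $\eta > 0$. Set $W_t := (1-t) W_D + t W^\circ$. Because both $W_D$ and $W^\circ$ have degree function $D$, linearity gives $\int_0^1 W_t(x,y)\,\mathrm{d}y = D(x)$, and hence $\tW_t \in \tsW_0$ for every $t \in [0,1]$. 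Continuity of $\tau$ along the path, together with $\tau(W_D) = l_\tau(D) < r_1 < r_2 \leq \tau(W^\circ)$, produces $t^* \in (0,1)$ with $\tau(\tW_{t^*}) = r_1$. Invoking (ii) and $I_{W_D}(W_D) = 0$,
\[
\phi_\tau(D, r_1) \leq J_D(\tW_{t^*}) \leq I_{W_D}(W_{t^*}) < t^* I_{W_D}(W^\circ) \leq t^*\bigl(J_D(\tW^\circ) + \eta\bigr).
\]
Letting $\eta \downarrow 0$ and using $t^* < 1$ gives $\phi_\tau(D, r_1) < \phi_\tau(D, r_2)$. The same inequality with $r_2 = \tau(\tW^\circ)$ and $r_1 = r$ forces every minimizer in \eqref{eq:var-formula-1} to satisfy $\tau(\tW) = r$, identifying the minimizer sets in \eqref{eq:var-formula-1} and \eqref{eq:var-formula-2}.

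Parts (3) and (4) combine the LDP with these analytic facts. For (3), $\{\tau \geq r\}$ is closed and $\{\tau > r\}$ is open, giving $\limsup_n n^{-2} \log \PR(\tau_{n,\bd} \geq r) \leq -\phi_\tau(D, r)$ and $\liminf_n n^{-2} \log \PR(\tau_{n,\bd} > r) \geq -\phi_\tau(D, r^+)$; these agree at right-continuity points, yielding the limit. For (4), the set $A_\varepsilon := \{\tW : \tau(\tW) \geq r,\ \delcut(\tW, F_\star) \geq \varepsilon\}$ is closed and hence compact, and is disjoint from $F_\star$; by lsc of $J_D$ the infimum $\inf_{A_\varepsilon} J_D$ is attained, and if it equalled $\phi_\tau(D, r)$, the attained minimizer would lie in $F_\star$, a contradiction. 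Taking $r$ to be a right-continuity point yields $C := \inf_{A_\varepsilon} J_D - \phi_\tau(D, r) > 0$, and \eqref{eq:conditional_graphon} follows by dividing the LDP upper bound on $A_\varepsilon$ by the LDP lower bound on $\PR(\tau_{n,\bd} \geq r)$.

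The principal delicate step is the choice of representative $W^\circ$ in the monotonicity argument: $J_D$ is the $\sM$-orbit infimum of the relative entropy $I_{W_D}$, so one must select a representative that is simultaneously sorted (to keep the convex path inside $\tsW_0$) and near-optimal for the infimum defining $J_D(\tW^\circ)$. The second requirement needs a rearrangement-type statement asserting that sorted representatives are essentially optimal when $W_D$ itself is sorted, which is plausible but requires careful justification. Combined with the strict convexity of $I_{W_D}$ from Assumption~\ref{assmp:degree}, these ingredients drive the entire proof.
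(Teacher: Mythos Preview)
Your approach mirrors the paper's almost exactly: convex interpolation between $W_D$ and a minimizer for strict monotonicity and part~(2), compactness plus lower semicontinuity for left-continuity and existence of minimizers, and the open/closed set dichotomy from the LDP for (3) and (4). The paper glosses over the very step you flag as delicate, writing only ``using the convexity of $J_D$'' for the inequality $J_D(\tW_r(\lambda'))\leq \lambda' J_D(\tW_r)$.

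You are right that this step needs justification, and without it your argument does not close: after $\phi_\tau(D,r_1) < t^*\bigl(J_D(\tW^\circ)+\eta\bigr)$, sending $\eta\downarrow 0$ only yields $\phi_\tau(D,r_1)\le \phi_\tau(D,r_2)$, because both $W^\circ$ and $t^*$ move with $\eta$ and you have no uniform bound $t^*\le t_0<1$. What is actually true, and fills the gap cleanly, is that a sorted representative is \emph{exactly} optimal, not just near-optimal. This comes from the product structure of $W_D$: since $\log\frac{W_D}{1-W_D}(x,y)=\beta(x)+\beta(y)$, a short computation gives
\[
I_{W_D}(W)=h_e(W)-\int_0^1 \beta(x)\,D_W(x)\,\dif x + C_D,
\]
where $D_W(x)=\int_0^1 W(x,y)\,\dif y$ and $C_D$ depends only on $\beta$. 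The first and third terms are invariant under $W\mapsto W^\phi$, while $D_{W^\phi}=D_W\circ\phi$, so
\[
\inf_{\phi\in\sM} I_{W_D}(W^\phi)=h_e(W)+C_D-\sup_{\phi\in\sM}\int_0^1 \beta(x)\,D_W(\phi(x))\,\dif x.
\]
Since $D$ is non-increasing, so is $\beta$; by the Hardy--Littlewood rearrangement inequality the supremum is attained when $D_W\circ\phi$ is the non-increasing rearrangement of $D_W$. Hence if $W$ already has degree function $D$, the identity $\phi$ is optimal and $I_{W_D}(W)=J_{W_D}(\tW)=J_D(\tW)$. With $\eta=0$ you then get $J_D(\tW_{t^*})\le t^* J_D(\tW^\circ)<J_D(\tW^\circ)$ directly from ordinary convexity and $J_D(\tW^\circ)>0$, and no limiting is needed.

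One smaller remark on part~(4): you invoke right-continuity of $\phi_\tau(D,\cdot)$ at $r$ to lower-bound the conditioning event, but the statement does not assume it. The paper's own proof is equally silent here; it establishes only that the numerator has rate strictly below $-\phi_\tau(D,r)$ and declares this ``enough''.
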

Chatterjee and Diaconis \cite{CD13} used the $\ldp$ for  Erd\H{o}s-R\'enyi random graphs to evaluate the limit of the partition function associated with exponential random graphs \cite[Theorem 3.1]{CD13}. 
In a related direction, setting $\cG_{n,\bd}$ to be the set of all simple graphs on $n$ vertices with degree sequence $\bd$, 
 we consider  the probability measure on $\cG_{n,\bd}$ defined by 
\begin{eq}
\PR_{n,\bd,\tau} (G) = \e^{n^2(\tau (\tilde{W}^G) - Z_{n,\tau})}, \quad 
\forall G\in \cG_{n,\bd},
\end{eq}where
 $\tau$ is a bounded continuous function on $(\tsW,\delcut)$, and
$Z_{n,\tau} = \frac{1}{n^2} \log \sum_{G\in \cG_{n,\bd}} \e^{n^2\tau(\tilde{W}^G)}$. We will refer to  $Z_{n,\tau}$ as the microcanonical partition function. Its limiting value is naturally associated with the enumeration problem of graphs with given degrees and constrained sub-graph counts (see \eqref{eq:limit-number-graph} below). 
Our next corollary derives the limit of the microcanonical partition function. To this end, 
define the entropy function 
\begin{eq}\label{defn:entropy-single}
h_{e} (W) = \frac{1}{2} \int_{[0,1]^2} \Big[W(x,y) \log(W(x,y)) + (1-W(x,y)) \log(1-W(x,y))\Big]\dif x \dif y.
\end{eq}
Finally, let $N_{n.\tau}(\bld{d}, r)$ denote the number of graphs $G\in \cG_{n,\bd}$ with $\tau (\tW^{\sss G}) \geq r$. 

\begin{corollary} \label{cor:partition-function}
Let $\tau$ be a bounded continuous function on $(\tsW,\delcut)$. Under {\rm Assumption~\ref{assmp:degree}}, 
\begin{eq}\label{eq:limit-partition-function}
Z_{\tau} = \lim_{n\to\infty}Z_{n,\tau} = \sup_{\tW\in \tsW} \big(\tau(\tW)-J_D(\tW)\big) + h_e(W_D).
\end{eq}
Moreover, for any continuity point $r$ of $\phi_\tau (D,\cdot)$, 
\begin{eq}\label{eq:limit-number-graph}
\lim_{n\to\infty}\frac{1}{n^2} \log N_{n.\tau}(\bld{d}, r) = -\phi_{\tau} (D,r)+ h_e(W_D).
\end{eq}
\end{corollary}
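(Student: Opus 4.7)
The strategy is to combine Varadhan's lemma with an independent asymptotic enumeration of $|\cG_{n,\bd}|$: since $\tPRD$ is the uniform probability measure on $\cG_{n,\bd}$, one has the decomposition
\[
Z_{n,\tau} \;=\; \frac{1}{n^2}\log |\cG_{n,\bd}| \;+\; \frac{1}{n^2}\log \E_{\tPRD}\bigl[\e^{n^2\tau(\tW)}\bigr],
\]
so the problem reduces to identifying the two limits on the right. The second term is essentially immediate: $(\tsW,\delcut)$ is compact, $\tau$ is bounded and continuous on it, and by Theorem~\ref{thm:ldp-given-degree} the sequence $(\tPRD)$ satisfies an $\ldp$ at speed $n^2$ with good rate function $J_D$; Varadhan's lemma then yields
\[
\frac{1}{n^2}\log\E_{\tPRD}\bigl[\e^{n^2\tau(\tW)}\bigr]\;\longrightarrow\;\sup_{\tW\in\tsW}\bigl(\tau(\tW)-J_D(\tW)\bigr).
\]

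To enumerate $|\cG_{n,\bd}|$, I would invoke the Chatterjee--Diaconis--Sly sampling construction recalled in Section~\ref{sec:CDS-facts}: if $G_n^\star$ denotes the inhomogeneous Bernoulli graph on $[n]$ with edge probabilities $\pi_{ij}^n=\e^{\beta_i^n+\beta_j^n}/(1+\e^{\beta_i^n+\beta_j^n})$, where $\bld\beta^n$ is the pinning vector producing the prescribed expected degrees $\bd$, then every $G\in\cG_{n,\bd}$ receives the same probability
\[
p_n \;=\; \exp\Bigl(\sum_{i\in[n]} d_i\,\beta_i^n \;-\; \sum_{i<j}\log\bigl(1+\e^{\beta_i^n+\beta_j^n}\bigr)\Bigr),
\]
so that $|\cG_{n,\bd}|=\PR(G_n^\star\in\cG_{n,\bd})/p_n$. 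Using $\beta_i^n\to\beta(i/n)$ from the CDS analysis underlying Proposition~\ref{prop:graphon-limit}, the right-hand side above is a Riemann sum whose limit, after substituting $D(x)=\int W_D(x,y)\,\dif y$ and the identities $\beta(x)+\beta(y)=\log(W_D/(1-W_D))$ and $\log(1+\e^{\beta(x)+\beta(y)})=-\log(1-W_D)$, collapses exactly to the entropy term $h_e(W_D)$ from \eqref{defn:entropy-single}. Furthermore, $\PR(G_n^\star\in\cG_{n,\bd})$ decays at most sub-exponentially in $n^2$, since $\bd$ is, by construction, the typical degree sequence of $G_n^\star$; this is a local-limit-theorem statement available from the arguments in~\cite{CDS11}, and gives $\tfrac{1}{n^2}\log\PR(G_n^\star\in\cG_{n,\bd})\to 0$. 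Combining the three ingredients yields \eqref{eq:limit-partition-function}.

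The second claim \eqref{eq:limit-number-graph} then follows by factoring $N_{n,\tau}(\bd,r) = |\cG_{n,\bd}|\cdot \tPRD(\tau_{n,\bd}\geq r)$. At any right-continuity point $r$ of $\phi_\tau(D,\cdot)$---and by Corollary~\ref{cor:ldp-cont-func}(1) the function $\phi_\tau(D,\cdot)$ is left-continuous, so right-continuity and full continuity coincide---part~(3) of Corollary~\ref{cor:ldp-cont-func} gives $\tfrac{1}{n^2}\log\tPRD(\tau_{n,\bd}\geq r)\to -\phi_\tau(D,r)$; combining with the asymptotics of $|\cG_{n,\bd}|$ from the previous step yields the claim.

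The main obstacle is the enumeration step: securing the sub-exponential (on the $n^2$ scale) lower bound on $\PR(G_n^\star\in\cG_{n,\bd})$ requires a local limit theorem for the joint degree vector of the inhomogeneous random graph together with uniform control on the pinning parameters $\bld\beta^n$, and the algebraic simplification reducing the Riemann-sum limit of $\tfrac{1}{n^2}\log p_n$ to $h_e(W_D)$ must be executed carefully. Both inputs are essentially already present in~\cite{CDS11}, so once they are extracted, the remainder of the argument is a routine application of Varadhan's lemma and the continuous-functional $\ldp$ of Corollary~\ref{cor:ldp-cont-func}.
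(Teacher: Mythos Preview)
Your proposal is correct and follows essentially the same route as the paper: the enumeration step is precisely Lemma~\ref{lem:graph-count} (same CDS identity $|\cG_{n,\bd}|=\PR(\bld d(\hat G_n)=\bld d)/p_n$, the $\e^{-n^{7/4}}$ lower bound from Lemma~\ref{lem:CDS-counting}, and the same Riemann-sum computation collapsing to $h_e(W_D)$), and your factorization for $N_{n,\tau}(\bld d,r)$ matches the paper's. The only cosmetic difference is that you invoke Varadhan's lemma directly, whereas the paper carries out the equivalent argument by hand---partitioning the range of $\tau$ into $\varepsilon$-intervals and applying the $\ldp$ upper/lower bounds on the resulting closed/open preimages.
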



\subsection{Discussion}\label{sec:discussion}

\paragraph{The variational problem.}
Corollary~\ref{cor:ldp-cont-func} characterizes the probability of a rare event in terms of a variational problem \eqref{eq:var-formula-1}.
From the perspective of large deviation theory, the natural follow up question concerns the  structure of $G_{n,\bd}$,  conditioned on the rare event. Using \eqref{eq:conditional_graphon},
this conditional structure corresponds to the minimizers
of~\eqref{eq:var-formula-1}. 
The variational problem \eqref{eq:var-formula-1} has attracted significant attention in the Erd\H{o}s-R\'{e}nyi case. 
For instance, it is now understood that in the so-called replica symmetric regime, conditioned on the upper tail event for triangle counts, the graph is close to an Erd\H{o}s-R\'enyi with a higher edge density \cite{LZ15}. Note that the replica symmetric regime is no longer tenable under exact constraints, such as a fixed number of edges, triangles, degrees, etc. In a set of related papers, 
\cite{RS13,KRRS17,KRRS17b,KRSR16} study the structure of the minimizer under constraints on the edge, triangle or star counts, and discover intriguing characteristics of the minimizers. 
%
However, to the best of our knowledge, this problem has not been studied under degree constraints. We expect this case to be considerably more challenging than the prior settings.   


A careful reader has  noticed that Corollary~\ref{cor:ldp-cont-func}~(2) holds when $r$ is a continuity point of $\phi_{\tau} (D, \cdot)$. 
For Erd\H{o}s-R\'enyi random graphs, the continuity of this function has been established, when $\tau$ represents a subgraph density, the largest eigenvalue, etc. \cite{LZ15,Cha17}. Their proof is perturbative, and the idea does not generalize to the setting with given degrees. 
%
In fact, $\phi_{\tau}(D,\cdot)$ could be degenerate in constrained spaces.
For example, the largest eigenvalue of random $d$-regular graphs equals $d$, and thus the rate function is degenerate. More generally, a deterministic function of the degrees, e.g. any $k$-star density, is constant in this case, and gives rise to degenerate rate functions. 

\paragraph{Counting graphs with given degrees and subgraph densities.}
Counting graphs with given degrees has been studied extensively in Combinatorics  \cite{MW90,LW17,BH13,HLM19}. 
 For example, \cite[Theorem 1.4]{BH13} evaluates the leading asymptotics of the number of graphs with given degrees, and expresses it in terms of an entropy. Corollary~\ref{cor:partition-function} yields a formula for the asymptotic number of graphs with given degrees and a  specified  subgraph count. However, this description is completely implicit, and explicit solutions for general degree sequences could be significantly challenging. 
 






\paragraph{The sparse regime.}
The breakthrough result of \citet{CV11} completely resolved the question of large deviations for subgraph counts of dense Erd\H{o}s-R\'{e}nyi random graphs. 
The corresponding question for sparse Erd\H{o}s-R\'{e}nyi random graphs $G(n,p)$ with $p \to 0$  has intrigued researchers in Probability and Combinatorics for a long time.
For any fixed graph $H$ and $\delta>0$, the infamous upper tail problem sought to understand the probability that the number of copies of $H$ in $G(n,p)$ exceeds $(1+\delta)$ times its expectation. 
Perhaps surprisingly, it is even difficult to come up with a good general guess as to what the correct order of the exponential rate of decay is. 
This can be observed in a class of counter-examples to the DeMarco-Kahn upper tail conjecture, constructed by \u{S}ileikis and Warnke \cite{SW19}. 
To address this challenging  question, Chatterjee and Dembo \cite{CD16} initiated the theory of non-linear large deviations. They establish that for any fixed subgraph $H$ and $\delta>0$, 
the upper tail probability reduces  to a variational problem on the space of weighted graphs whenever $p\to0$, $p\geq n^{-\alpha_H}$.  Remarkably, the variational problem was solved in the special case where $H$ is a clique by \citet{LZ17} shortly thereafter. Subsequently, \citet{BGLZ17} resolved this question for all fixed subgraphs.
Following the initial breakthrough of \citet{CD16}, the exponent $\alpha_H$ was improved considerably by \citet{Eld18}.
Recently, \citet{CD18}, \citet{Aug18}, and \citet{HMS19} have further improved the bounds on $\alpha_H$, deriving the optimal exponent for certain specific subgraphs such as cycles, cliques, regular graphs etc. 

These exciting recent developments have dramatically improved our understanding of the upper tail problem on sparse Erd\H{o}s-R\'{e}nyi random graphs. 
It would be fascinating to answer this question for sparse random graphs with a given degree sequence.
In fact, the simpler question of enumeration of all  graphs with a given degree sequence is not very well understood at present. We  believe these questions furnish a fertile ground for future research. 
After the first version of this paper was posted online, there have been recent interesting developments for sparse $d$-regular random graphs. 
 For $n^{1-\varepsilon(H)}\ll d\ll n$ (where $\varepsilon(H)$ is explicit), 
 Bhattacharya and Dembo~\cite{BD20} resolved the upper tail problem for subgraphs $H$ having a regular two-core.  
 Recently, Gunby~\cite{Gun20} considered general subgraphs $H$, solving the upper tail problem when some subgraph $H'\subset H$ has average degree greater than 4.

\section{Properties of the rate function} \label{sec:prelim}
Recall the definition of $J_{W_0}$ from \eqref{eq:rate-IRG}. In this section, we will prove some elementary facts about $J_{W_0}$, that will be crucial in our proofs.
Throughout, we denote $\tbB(\tW,\eta) = \{\tW ': \delcut(\tW,\tW') \leq \eta\}$ and  $\bB(\tW,\eta) = \{W': \tW'\in \tbB(\tW,\eta)\}$.
We first prove the lower semi-continuity of our rate function.
\begin{lemma}\label{lem:rate-lower-semicont}
The function $J_{W_0} (\tilde{W}) = \sup_{\eta >0} \inf_{W' \in \bB(\tW,\eta)} I_{W_0}(W')$ is well-defined on the space~$\tsW$.
Moreover, $J_{W_0}$ is lower semi-continuous on $(\tsW,\delcut)$.
\end{lemma}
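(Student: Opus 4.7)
The plan is to recognize $J_{W_0}$ as nothing but the standard lower semi-continuous envelope of a well-defined function on the metric space $(\tsW, \delcut)$, and then invoke the general fact that such envelopes are automatically lower semi-continuous on any metric space. I expect no serious obstacle: the sup-inf construction yields lower semi-continuity essentially by design. The only points requiring a little care are (i) translating between the underlying graphon space $\sW$ and its quotient $\tsW$ when defining the balls, and (ii) confirming that the away-from-boundary condition furnishes finite values throughout.

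For well-definedness on $\tsW$, I would observe that the ball $\bB(\tW, \delta) = \{W' \in \sW : \delcut(W, W') \leq \delta\}$ does not depend on the representative chosen for $\tW$, since $\delcut$ is already a function on $\tsW \times \tsW$ (any two representatives of the same class have the same cut-distance to a given $W'$). Consequently $\inf_{W' \in \bB(\tW,\delta)} I_{W_0}(W')$, and hence $J_{W_0}(\tW)$, depends only on the class $\tW$.

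When $W_0$ is away from the boundary, say $\eta \leq W_0 \leq 1-\eta$ a.e., the integrand $W\log(W/W_0) + (1-W)\log((1-W)/(1-W_0))$ in the definition of $I_{W_0}$ is pointwise non-negative (it is the KL divergence between two Bernoulli laws) and bounded from above by a constant depending only on $\eta$ (using $0 \log 0 = 0$ and $|W \log W| \leq 1/\mathrm{e}$). Hence $I_{W_0}$ is a bounded, non-negative function on $\sW$, so the inner infimum in the definition of $J_{W_0}(\tW)$ is a finite real number, ruling out the degenerate situation $J_{W_0} \equiv \infty$.

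Finally, to verify lower semi-continuity, suppose $\tW_n \to \tW$ in $\delcut$ and fix $\delta > 0$. For all sufficiently large $n$ we have $\delcut(\tW_n,\tW) \leq \delta/2$, and the triangle inequality gives $\bB(\tW_n, \delta/2) \subset \bB(\tW, \delta)$. Taking infima over these nested sets yields
\[
J_{W_0}(\tW_n) \;\geq\; \inf_{W' \in \bB(\tW_n,\delta/2)} I_{W_0}(W') \;\geq\; \inf_{W' \in \bB(\tW,\delta)} I_{W_0}(W'),
\]
so $\liminf_n J_{W_0}(\tW_n) \geq \inf_{W' \in \bB(\tW,\delta)} I_{W_0}(W')$. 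Taking the supremum over $\delta > 0$ on the right side produces $\liminf_n J_{W_0}(\tW_n) \geq J_{W_0}(\tW)$, which is precisely lower semi-continuity.
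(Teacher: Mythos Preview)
Your proof is correct and follows essentially the same approach as the paper: both recognize $J_{W_0}$ as the lower semi-continuous envelope of a bounded function on $(\tsW,\delcut)$, with the paper rewriting this as $\liminf_{\tW'\to\tW} H(\tW')$ for $H(\tW')=\inf_{\phi\in\sM}I_{W_0}(W'^{\phi})$ and citing the standard fact that such a pointwise $\liminf$ is lower semi-continuous, whereas you supply the direct triangle-inequality argument that proves that fact.
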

\begin{proof}
For any $W_1\sim W_2$, it follows that $\{W': \delcut(\tW_1,\tW') \leq \delta\} = \{W': \delcut(\tW_2,\tW') \leq \delta\}$, and therefore $J_{W_0}$ is well-defined on $\tsW$. 
Define the function $H:\tsW\mapsto [0,\infty]$ by $ H(\tW' ) = \inf_{g: \delcut (\tW',\tg) = 0} I_{W_0}(g)$.
Now, 
\begin{eq}
J_{W_0}(\tW) = \sup_{\eta >0} \inf_{W' \in \bB(\tW,\eta)} I_{W_0}(W') = \sup_{\eta >0} \inf_{\tW' \in \tbB(\tW,\eta)} H(\tW') = \liminf_{\tW'\to \tW} H(\tW'),
\end{eq}
and it is a standard fact in analysis that the function obtained by taking pointwise $\liminf$ of a function must be lower semi-continuous.
This completes the proof.
\end{proof}
The next result shows that the relative entropy between $W$ and $W_0$ is zero if and only if they are in the same equivalence class.

\begin{lemma} \label{lem:rate-zero}
$J_{W_0} (\tW) = 0$ if and only if $\delcut(\tW,\tW_0) = 0$.
\end{lemma}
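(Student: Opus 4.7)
The plan is to prove both implications directly using the representation $J_{W_0}(\tW) = \inf_{\phi \in \sM} I_{W_0}(W^\phi)$ from \eqref{eq:rate-IRG}, combined with nonnegativity of the relative entropy and Pinsker's inequality.

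For the ``if'' direction, suppose $\delcut(W,W_0)=0$. Since $\delcut$ is a metric on $\tsW$, this forces $\tW = \tW_0$, hence there exists $\phi \in \sM$ with $W^\phi = W_0$ almost everywhere. The pointwise integrand of $I_{W_0}$ vanishes when both arguments agree, so $I_{W_0}(W_0)=0$. Therefore $J_{W_0}(\tW)=J_{W_0}(\tW_0)\leq I_{W_0}(W_0)=0$, and the reverse inequality $J_{W_0}(\tW)\geq 0$ is immediate since (up to a factor of $1/2$) the integrand defining $I_{W_0}$ is the KL divergence between two Bernoulli laws.

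For the ``only if'' direction, assume $J_{W_0}(\tW)=0$. By definition of the infimum, one can select a sequence $\phi_k \in \sM$ with $I_{W_0}(W^{\phi_k}) \to 0$. Since the integrand of $I_{W_0}$ equals $\tfrac12$ times the KL divergence between $\mathrm{Ber}(W^{\phi_k}(x,y))$ and $\mathrm{Ber}(W_0(x,y))$, Pinsker's inequality applied pointwise and then integrated yields $\int_{[0,1]^2} (W^{\phi_k}(x,y) - W_0(x,y))^2 \dif x\, \dif y \leq I_{W_0}(W^{\phi_k}) \to 0$. Thus $W^{\phi_k}\to W_0$ in $L^2$, hence in $L^1$. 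Since the cut norm is dominated by the $L^1$ norm, we get $\ccut{W^{\phi_k}-W_0}\to 0$, and finally $\delcut(W,W_0)\leq \ccut{W^{\phi_k}-W_0}\to 0$.

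The argument is essentially a packaging of Pinsker's inequality with the standard inequality $\ccut{\cdot}\leq \|\cdot\|_{L^1}$, so I do not anticipate any serious obstacle. The only point worth flagging is that one must justify using the representation $J_{W_0}(\tW)=\inf_{\phi \in \sM} I_{W_0}(W^\phi)$; this is exactly the content of the second equality in \eqref{eq:rate-IRG}, which is granted by Lemma~\ref{lem:two-rate-functions-equal} under the running assumption that $W_0$ is away from the boundary (an assumption we invoke here, as we did for lower semicontinuity in Lemma~\ref{lem:rate-lower-semicont}).
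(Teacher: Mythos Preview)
The proposal is correct and follows essentially the same approach as the paper: both use the representation $J_{W_0}(\tW)=\inf_{\phi\in\sM}I_{W_0}(W^\phi)$, and for the necessity direction both pass from $I_{W_0}(W^{\phi_k})\to 0$ through Pinsker's inequality to $L^2$ (hence $L^1$, hence cut-norm) convergence of $W^{\phi_k}$ to $W_0$. The only differences are cosmetic---you argue directly rather than by contrapositive, and you make explicit the reliance on Lemma~\ref{lem:two-rate-functions-equal} (and thus on $W_0$ being away from the boundary), which the paper leaves implicit.
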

\begin{proof}
The sufficiency part is obvious. 
 To see the necessity, assume $J_{W_0}(\tW) = 0$. 
 In this case, there exists $(g_n)_{n\geq 1} \subset \sW$ with $\delcut(\tg_n,\tW) \to 0$ such that $I_{W_0}(g_n) \to 0$. Using Taylor expansion, one immediately obtains
 \begin{eq}
& \frac{1}{2} \int_{[0,1]^2} \bigg[ g_n(x,y) \log \Big( \frac{g_n(x,y)}{W_0(x,y)}\Big) + (1- g_n(x,y)) \log \Big(\frac{1- g_n(x,y)}{1-W_0(x,y)} \Big)\bigg] \mathrm{d}x \mathrm{d}y \\
 &\geq \int_{[0,1]^2} (g_n(x,y) - W_0(x,y))^2 \mathrm{d}x \mathrm{d}y.  
 \end{eq}
Next, using Cauchy-Schwarz inequality, $\|g_n-W_0\|_{\sss L_1} \to 0$, and consequently $\delcut(\tg_n,\tW_0)\to 0$. Thus, $\delcut(\tW, \tW_0)\leq \delcut(\tg_n,\tW)+ \delcut(\tg_n,\tW_0)\to 0$ as $n\to\infty$. This completes the proof. 
\end{proof}
Next we will prove that if we have a sequence $W_0^n$ converging to $W_0$ in $L_1$,  then the corresponding rate functions converge as well. 

\begin{lemma}\label{lem:l1-conv-vont}
Suppose that $\|W_0^n-W_0\|_{\sss L_1} \to 0$, and that $(W_0^n)_{n\geq 1}$ is away from  boundary.
Then, $J_{W_0^n}(\tilde{W}) \to J_{W_0}(\tilde{W})$ uniformly over $W\in \sW$, as $n \to \infty$.
\end{lemma}

\begin{proof}
Let $\eta>0$ be such that $\eta < W_0^n <1-\eta$ for all $n\geq 1$. 
By taking limit as $n\to \infty$, we also have that  $\eta < W_0 <1-\eta$ almost surely.
Thus, using the Lipschitz continuity of the log function, it follows that for all $x,y$, 
\begin{eq}
\max \Big\{ \Big| \log \Big(\frac{W_0^n(x,y)}{W_0(x,y)}\Big) \Big|,  \Big|\log \Big(\frac{1-W_0^n(x,y)}{1-W_0(x,y)}\Big) \Big| \Big\} \leq c |W_0^n(x,y) - W_0(x,y)|,
\end{eq}for some constant $c>0$.
Now, 
\begin{eq} \label{eq:bound-entrop-difference}
&|I_{W_0^n}(W)- I_{W_0}(W)| \\
&= \bigg| \int_{[0,1]^2} W(x,y)\log \Big(\frac{W_0^n(x,y)}{W_0(x,y)}\Big) + (1-W(x,y))\log \Big(\frac{1-W_0^n(x,y)}{1-W_0(x,y)}\Big)\dif x \dif y  \bigg| \\
& \leq c \int_{[0,1]^2} |W_0^n(x,y) - W_0(x,y)| \dif x \dif y = c \|W_0^n-W_0\|_{\sss L_1}.
\end{eq}
The proof now follows upon using the definition of the rate function, and noting the bound in the final term of \eqref{eq:bound-entrop-difference} is uniform over $W\in\sW$. 
\end{proof}
We finally conclude this section by showing that for the special case of random regular graphs, the relative entropy reduces to the form give in \eqref{eq:rate-regular}.

\begin{lemma}\label{lem:two-rate-functions-equal}
Fix $W\in \sW$ and  $p\in(0,1)$.
Then, $J_{p}(\tW) = \sup_{\eta >0} \inf_{g \in \bB(\tW,\eta)} I_p (g) = I_p(W)$.
\end{lemma}
\begin{proof}
By \cite[Corollary 5.1]{Cha17}, whenever $\dcut (W_n,W) \to 0$, we have 
\begin{eq}\label{lower-semi-cont-cut}
\liminf_{n\to\infty}I_{p}(W_n) \geq I_{p}(W).
\end{eq}
Now let us denote $I (\eta) := \inf_{ g\in \bB(\tW,\eta)} I_{p}(g)$. 
Then $I(0) = \inf_{g: \delcut (\tg,\tW) = 0} I_p(g) = I_p(W)$, since $I_p(W_1) = I_p(W_2)$ whenever $W_1\sim W_2$. 
%
Also, $I(\eta) \leq I(0)$. 
Thus, in order to complete the proof, we need to show that $I(0) = \sup_{\eta>0} I(\eta)$, i.e., for all $\varepsilon >0$, $\exists \eta(\varepsilon)>0$ such that $I(\eta) > I(0) - \varepsilon$ for all $\eta\in (0,\eta(\varepsilon))$. 
Suppose that this does not hold. Using \cite[(3.15)]{BCLSV08}, there exists $\varepsilon>0$ and $\eta_{n}\to 0$ such that $I(\eta_n) \leq I(0) -\varepsilon$ for all $n\geq 1$.
This implies that there exist $(g_n)_{n\geq 1} \subset \sW$ and  $(\phi_n)_{n\geq 1} \subset \sM$  such that $\dcut (W,g_n^{\phi_n}) \leq  \eta_n$, but $I_{p}(g_n) < I(\eta_n) + \varepsilon/2< I(0) -\varepsilon/2$ for all $n\geq 1$. 
Since $I_{p}(g_n) = I_{p}(g_n^{\phi_n})$, it follows that $I_{p}(g_n^{\phi_n})< I(0) -\varepsilon/2$.
Now, using \eqref{lower-semi-cont-cut}, we have that $I_{p} (W) < I(0)-\varepsilon/2$ which yields a contradiction because $I(0) = I_{p}(W)$.
\end{proof}

\begin{remark}\normalfont
In a recent preprint, Markering~\cite{Mar20} derives a tractable form for the lower semi-continuous envelope $J_{W_0}(\cdot)$ by showing that 
$\sup_{\eta >0} \inf_{g \in \bB(\tW,\eta)} I_{W_0} (g) = \inf_{\phi\in \sM} I_{W_0} (g^\phi)$ for any $W_0$ such that $\log W_0, \log (1-W_0)\in L_1$. 
\end{remark}

\section{An upper bound for inhomogeneous random graphs}\label{sec:irg}
In this section, we obtain a large deviation upper bound for inhomogeneous random graphs. 
Let $\sW^{\sss (r)}\subset \sW$ denote the space of block constant graphons with $r$ equal-sized blocks, i.e., for any $g\in \sW^{\sss (r)}$, we have  
$g(x,y) = g_{ij}$ for all $x,y\in [\frac{i-1}{r},\frac{i}{r})\times [\frac{j-1}{r},\frac{j}{r})$.
To generate inhomogeneous random graphs on $n$ vertices, we take $g\in \sW^{\sss (n)}$ of the following special form with zeroes on the diagonal: 
\begin{eq}\label{eq:piecewise-const}
g (x,y) = 
\begin{cases}
g_{ij}, \quad &x,y \in \big[\frac{i-1}{n},\frac{i}{n}\big)\times \big[\frac{j-1}{n},\frac{j}{n}\big), \quad i\neq j\\
0 &\text{otherwise}.
\end{cases}
\end{eq}
We denote the collection of graphons in \eqref{eq:piecewise-const} by $\sW^{\sss (n)}_{\sss \mathrm{IRG}}$.
Given any graphon $W \in \sW$, consider the random graph $G_{n}= G_n(W)$ on vertex set $[n]$ obtained by keeping an edge between vertices $i$ and $j$ with probability $ W((i-1)/n,(j-1)/n)$.
Let $\PR_{n,W}$ denote the probability measure on $\sW$ induced by the empirical graphon of $G_{n}(W)$, and let $\tilde{\PR}_{n,W}$ denote the corresponding measure on $\tsW$.
The following proposition derives the LDP upper bound for $\PR_{n,W_0^n}$ where $W_0^n\in \sW^{\sss (n)}_{\sss \mathrm{IRG}}$.  
Recall $\tbB(\tW,\eta) = \{\tW ': \delcut(\tW,\tW') \leq \eta\}$ and  $\bB(\tW,\eta) = \{W': \tW'\in \tbB(\tW,\eta)\}$.
For any $W_0^n\in \sW^{\sss (n)}_{\sss \mathrm{IRG}}$, the value in the diagonal blocks  $[\frac{i-1}{n},\frac{i}{n})\times [\frac{i-1}{n},\frac{i}{n})$ is zero. 
Nevertheless, we say that $(W_0^n)_{n\geq 1}$ with $W_0^n\in \sW^{\sss (n)}_{\sss \mathrm{IRG}}$ is away from the boundary if there exists some fixed $\eta>0$ such that $\eta <W_0^n(x,y)<1-\eta$ in the non-diagonal blocks for all $n\geq 1$.
\begin{proposition} \label{thm:ldp-IRG}
Fix $\varepsilon>0$.  
Let $W_0^n \in \sW^n_{\sss \mathrm{IRG}}$ be such that $\|W_0^n -W_0\|_{\sss L_1} \to 0$, and further assume that $(W_0^n)_{n\geq 1}$ is away from boundary. Then, there exists $\eta(\varepsilon)>0$ such that for all $\eta \in (0,\eta(\varepsilon))$
\begin{gather}
\limsup_{n\to\infty} \frac{1}{n^2}\log\tPRW(\tbB(\tW,\eta)) \leq - \inf_{f\in \bB(\tW,4\varepsilon) } I_{W_0} (f) +\varepsilon. \label{ldp-reg-up-IRG}
\end{gather}
\end{proposition}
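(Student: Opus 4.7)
The plan is to combine exponential tilting with a compactness-based covering argument. The starting point is the variational representation \eqref{eq:I-W0-W} of $I_{W_0}$: for $f \in \sW$,
\[I_{W_0}(f) = \tfrac{1}{2}\sup_{a}\Bigl[\langle a,f\rangle - \int \log\bigl(1+W_0(e^a-1)\bigr)\,\dif x\,\dif y\Bigr],\]
where $a$ ranges over symmetric $L^2$-functions. Since $\langle a,\cdot\rangle$ fails to be $\delcut$-continuous for general $a$, I would restrict the supremum to symmetric step functions on the canonical partition $\cP_k$ of $[0,1]$ into $k$ equal intervals; these approximate arbitrary $L^2$-tilts as $k\to\infty$, and crucially, $\langle a,\cdot\rangle$ against a step function on $\cP_k$ with $\|a\|_\infty\le M$ is $\dcut$-Lipschitz with constant $Mk^2$.

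For the pointwise tilt bound, fix $\tilde{f}^{*}\in \tbB(\tW,\eta)$ with representative $f^{*}\in \tilde{f}^{*}$ achieving $J_{W_0}(\tilde{f}^{*}) = I_{W_0}(f^{*})$, and select a step function $a$ on a sufficiently fine $\cP_k$ whose variational value approximates $I_{W_0}(f^{*})$ to within $\sigma$. By edge independence under $\PR_{W_0^n}$,
\[\E_{W_0^n}\Bigl[e^{\tfrac{n^2}{2}\langle a,\,W^{G_n}\rangle}\Bigr] = \prod_{i<j}\bigl(1+q_{ij}^n(e^{a_{ij}^{(n)}}-1)\bigr),\]
and taking the logarithm and using $\|W_0^n - W_0\|_{L^1}\to 0$ (via a Lipschitz-in-$W_0$ estimate in the spirit of Lemma~\ref{lem:l1-conv-vont}), this is $\exp\bigl(\tfrac{n^2}{2}\int \log(1+W_0(e^a-1))\,\dif x\,\dif y + o(n^2)\bigr)$. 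Markov's inequality combined with the step-function Lipschitz estimate then yields
\[\tfrac{1}{n^2}\log \tPRW\bigl(\tbB(\tilde{f}^{*},\delta)\bigr) \le -I_{W_0}(f^{*}) + \sigma + O(Mk^2\delta) + o(1).\]

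To conclude, exploit compactness of $\tbB(\tW,\eta)$ in $(\tsW,\delcut)$: a finite family $\{\tbB(\tilde{f}_j,\delta)\}_{j\in J}$ with $\tilde{f}_j \in \tbB(\tW,\eta)$ covers it. For $\delta$ small enough that $\eta+\delta < \varepsilon$---this is where the condition $\eta < c\varepsilon$ provides the slack needed to absorb the discretization error---each cover ball lies inside $\tbB(\tW,\varepsilon)$, and a union bound combined with the pointwise estimate gives
\[\tfrac{1}{n^2}\log \tPRW\bigl(\tbB(\tW,\eta)\bigr) \le -\inf_{f\in \bB(\tW,\varepsilon)} I_{W_0}(f) + \sigma + O(Mk^2\delta) + o(1).\]
Sending $\sigma,\delta \to 0$ completes the proof. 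The principal technical obstacle is twofold: first, $\langle a,\cdot\rangle$ is not $\delcut$-invariant, so passing from an estimate on $\dcut$-balls (where step-function integrals are Lipschitz) to one on $\delcut$-balls requires carefully matching the representative of $\tilde{f}^{*}$ to the rearrangement witnessing $\delcut(g,\tilde{f}^{*})\le\delta$ for each $g$; second, for $f^{*}$ approaching $\{0,1\}$ the optimal tilt $\log\frac{f^{*}(1-W_0)}{(1-f^{*})W_0}$ may have large $L^\infty$-norm, forcing a truncation of $f^{*}$ away from the boundary at some level $\kappa \to 0$, with the step-function scale $k = k(\kappa)$ and ball radius $\delta = \delta(\kappa)$ chosen in tandem so that all error terms vanish in the correct order.
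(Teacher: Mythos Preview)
Your proposal has a genuine gap at the central Markov step, and you have already half-diagnosed it yourself. The claimed inequality
\[
\frac{1}{n^2}\log\tPRW\bigl(\tbB(\tilde f^{*},\delta)\bigr)\le -I_{W_0}(f^{*})+\sigma+O(Mk^2\delta)+o(1)
\]
does not follow from Markov with a single tilt $a$. Markov bounds events of the form $\{\langle a,W^{G_n}\rangle\ge t\}$, and your step-function Lipschitz estimate $|\langle a,g\rangle-\langle a,f^{*}\rangle|\le Mk^{2}\dcut(g,f^{*})$ makes this useful on $\dcut$-balls around the \emph{specific} representative $f^{*}$. But the set you must control is $\bB(\tilde f^{*},\delta)=\{g:\delcut(g,f^{*})\le\delta\}=\bigcup_{\phi\in\sM}\{g:\dcut(g,(f^{*})^{\phi})\le\delta\}$, and $\langle a,(f^{*})^{\phi}\rangle$ can be arbitrarily far from $\langle a,f^{*}\rangle$. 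Your suggested fix of ``matching the representative of $\tilde f^{*}$ to the rearrangement witnessing $\delcut(g,\tilde f^{*})\le\delta$'' is circular: the witnessing $\phi$ depends on the random outcome $g=W^{G_n}$, so it cannot be fixed before the tilt is applied. A compactness cover in $\delcut$ does not help here, because each ball of the cover is again a $\delcut$-ball with exactly the same defect.

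The paper resolves this in a genuinely different way. First, a weak form of Szemer\'edi regularity (\cite[Theorem~3.1]{Cha17}) produces a \emph{finite} set $\sW(\varepsilon)\subset\sW$ such that for any graph $G_n$ there exist $\sigma\in\sM_n$ and $h\in\sW(\varepsilon)$ with $W^{G_n^{\sigma}}\in\rB(h,\varepsilon)$; this converts the $\delcut$-ball into a union over finitely many $\dcut$-balls $\rB(h,\varepsilon)$ and the $n!$ vertex permutations. Since $n!=e^{o(n^2)}$, the permutation union bound is harmless at speed $n^2$. Second, for each $h$, the probability $\PR_{n,W_0^n}(\bigcup_{\sigma}\{W^{G_n^{\sigma}}\in\rB(h,\varepsilon)\})$ is handled by Proposition~\ref{prop:weak-ldp-ub}: one passes to the weak topology, covers by balls in a metric built from dyadic-square averages, and applies Chernoff square-by-square. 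The tilt on each dyadic square may then depend on $\sigma$, while the discretisation error depends only on the dyadic scale and is uniform in $\sigma$. This is precisely the mechanism your sketch lacks: a way to let the tilt track the unknown rearrangement while keeping the error terms uniform. Your finite-$\delcut$-cover could in principle replace Szemer\'edi, but only if you additionally insert the union over $\sM_n$ and then prove the per-$\sigma$ Chernoff bound with $\sigma$-uniform errors; as written, the argument does neither.
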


\begin{remark} \normalfont 
Proposition~\ref{thm:ldp-IRG} proves $\ldp$ upper bound for inhomogeneous random graphs under the stated conditions. A matching lower bound can be derived following the  arguments of~\cite{CV11}, which shows that $(\tilde{\PR}_{n,W_0^n})_{n\geq 1}$ satisfies $\ldp$ with speed $n^2$ and rate function~$J_{W_0}$. 
For the constrained case, additional challenges arise in the proof of the lower bound which we deal with in Section~\ref{sec:ldp-lower-bound}.
\end{remark}

Let $\sM_n$ be the set of permutations of $[n]$.
For $\sigma_n\in\sM_n$, let $G_n^{\sigma_n}$ denote the graph with vertices relabelled according to the permutation $\sigma_n$. 
In the special case of Erd\H{o}s-R\'enyi random graphs with $W_0^n \equiv p$, the distribution of $G_n^{\sigma_n}$ is the same for all $\sigma_n\in\sM_n$. 
This is a crucial ingredient in the LDP upper bound proof of \citet{CV11}, since the cut-metric also optimizes over all relabellings (see \cite[Lemma 2.5]{CV11}). 
For general $W_0^n$, the distribution of $G_n^{\sigma_n}$ depends on  $\sigma_n\in\sM_n$, and one needs to optimize the upper bound over all the $n!$ relabellings, which grows with $n$.
The argument for Erd\H{o}s-R\'enyi random graph does not generalize for such an optimal relabelling. 
To this end, we proceed in two steps: 
\begin{enumerate}[(i)]
    \item[(S1)] We replace $W_0^n$ by a block constant graphon $g_r\in \sW^{\sss (r)}$ with fixed number of blocks that is ``close'' to $W_0^n$. The error due to such an operation is small when $r$ is large, as we prove in Lemma~\ref{lem:block-approx-ldp-scale}.
    \item[(S2)] The next step is the key conceptual ingredient. If the base graphon $g_r$ is a block constant,  we can restrict ourselves to a finite number of relabellings without incurring significant error. Thus we only need to optimize over this finite set. We prove this in Lemma~\ref{lem:relabelling-uniform}.
\end{enumerate}
We formalize (S1) and (S2) in Sections~\ref{sec:block-const-approx} and \ref{sec:relabelling} respectively. Finally, we complete the proof of Proposition~\ref{thm:ldp-IRG} in Section~\ref{sec:proof-IRG-LDP-UB}.

\subsection{Replacing base graphon by block constants} \label{sec:block-const-approx}
The following statement allows us to replace $W_0^n$ by a block constant graphon with fixed number of blocks in our LDP upper bound.
\begin{lemma} \label{lem:block-approx-ldp-scale}
Let $W_0^n\in \sW^{\sss (n)}_{\sss \mathrm{IRG}}$ be such that $\|W_0^n-W_0\|_{\sss L_1} \to 0$, and $(W_0^n)_{n\geq 1}$ is away from boundary.  
There exists $(g_r)_{r\geq 1} \subset \sW$ that is away from boundary such that $g_r\in \sW^{\sss (r)}$, and for all $\varepsilon >0$ (sufficiently small), there exists $N_0=N_0(\varepsilon)$ such that for all $n\geq r\geq N_0$,  $W\in\sW$ and $\eta >0$
\begin{eq}
\bigg|\frac{1}{n^2} \log \PR_{n,W_{0}^n}( \bB(\tW,\eta)) - \frac{1}{n^2} \log \PR_{n,g_r}(\bB(\tW,\eta))\bigg| < \varepsilon.
\end{eq}
\end{lemma}
\begin{proof}
Define, for all  $(x,y) \in [\frac{i-1}{r},\frac{i}{r}) \times[\frac{j-1}{r},\frac{j}{r})$, 
\begin{eq}
g_{r}(x,y) = g_{ij}= r^2 \int_{\big[\frac{i-1}{r},\frac{i}{r}\big) \times\big[\frac{j-1}{r},\frac{j}{r}\big)} W_0(u,v)\dif u\dif v.
\end{eq}
Using \cite[Proposition 2.6]{Cha17},  $\|g_r - W_0\|_{\sss L_1} \to 0$ as $r\to\infty$,  and thus it follows that for all $\varepsilon>0$, there exists $N_0$ such that, for all $r,n\geq N_0$, $\|g_r - W_0^n\|_{\sss L_1} < \varepsilon$.
Also, since $W_0$ is away from the boundary, so is $(g_r)_{r\geq 1}$.
Now, note that 
\begin{eq}\label{eq:upper-equal}
\PR_{n,W_{0}^n}(\bB(\tW,\eta)) = \int_{\bB(\tW,\eta)} \e^{\log \frac{\dif \PR_{n,W_0^n}}{\dif \PR_{n,g_r}}} \dif \PR_{n,g_r}.
\end{eq}
Let $(w_{uv})_{1\leq u<v\leq n}$ be the block constant values of $W_0^n$.
Thus, for $I_{uv} \in \{0,1\}$, and $n\geq r$, 
\begin{eq}\label{eq:derivaitve}
&\log \bigg[\frac{\dif \PR_{n,W_{0}^n}}{\dif \PR_{n,g_r}}(I_{uv})_{u<v} \bigg]\\
&= \sum_{1\leq i\leq j\leq r} \sum_{\stackrel{u<v}{\frac{u-1}{n}\in [\frac{i-1}{r},\frac{i}{r}),\frac{v-1}{n}\in [\frac{j-1}{r},\frac{j}{r})}} \bigg(I_{uv} \log\Big(\frac{w_{uv}}{g_{ij}}\Big)+(1-I_{uv}) \log\Big(\frac{1-w_{uv}}{1-g_{ij}}\Big)\bigg).
\end{eq}
Thus, for any $(I_{uv})_{u<v}$,
\begin{eq}
&\frac{1}{n^2}\bigg|\log \frac{\dif \PR_{n,W_{0}^n}}{\dif \PR_{n,g_r}}(I_{uv})_{u<v}\bigg|\\
&\leq \frac{1}{n^2}\sum_{1\leq i\leq j\leq r} \sum_{\stackrel{u<v}{\frac{u-1}{n}\in [\frac{i-1}{r},\frac{i}{r}),\frac{v-1}{n}\in [\frac{j-1}{r},\frac{j}{r})}}  \bigg( \bigg|\log\Big(\frac{w_{uv}}{g_{ij}}\Big)\bigg|+ \bigg|\log\Big(\frac{1-w_{uv}}{1-g_{ij}}\Big)\bigg|\bigg) \\ 
&\leq C\|W_0^n-g_r \|_{\sss L_1} < C\varepsilon,
\end{eq}for some constant $C>0$, and for all $n\geq r\geq N_0$, where in the last step we have used the Lipschitz continuity of $\log$ on $[c_1,c_2]$ with $0<c_1<c_2<\infty$, and the fact that $(g_{r})_{r\geq 1}$ and $(W^n_0)_{n\geq 1}$ are away from the boundary. 
Now, \eqref{eq:upper-equal} yields that 
\begin{eq}
\PR_{n,W_{0}^n}(\bB(\tW,\eta)) \leq \e^{C \varepsilon n^2}\PR_{n,g_r}(\bB(\tW,\eta)).
\end{eq}
Thus the proof follows by replacing $C\varepsilon$ by $\varepsilon$. 
\end{proof}

\subsection{Approximation of relabelled graphs} \label{sec:relabelling}
Recall that $G_n^{\sigma_n}$ is obtained from the graph $G_n$ by relabelling the vertices with the permutation $\sigma_n\in\sM_n$. 
The next result shows that, for all large enough~$n$, we can construct a finite set of relabellings which can be used to approximate the distributions of $G_n^{\sigma_n}$ for all $\sigma_n\in\sM_n$. Recall the definition of $\sM$ from Section~\ref{sec:defns-1}. 
\begin{lemma}\label{lem:relabelling-uniform}
Suppose that $W_0,W\in \sW^{\sss (r)}$ with $r\geq 1$. 
Then, for any $\varepsilon>0$, there exists $n_{0} = n_{0}(r,\varepsilon)$, and a finite set $\T = \T (r,\varepsilon) \subset \sM$ such that for all $n\geq n_0$ and $\sigma_n\in \sM_n$, there exists $\tau \in\T$ satisfying
\begin{eq}\label{eq:fixed-perm-bound}
\PR_{n,W_0} (\dcut(W^{\sss G_n^{\sigma_n}}, W)\leq \varepsilon) \leq \PR_{n,W_0} (\dcut(W^{\sss G_n,\tau }, W )\leq 2\varepsilon).
\end{eq}
\end{lemma}
\begin{proof}
We write $A_i = [\frac{i-1}{r},\frac{i}{r})$ for $i\in [r]$. 
For a vertex $v\in [n]$, we say that $v$ is in the interval $A\subset [0,1]$, denoted by $v\rightsquigarrow A$, if $[\frac{v-1}{n},\frac{v}{n}) \subset A$.
Without loss of generality, we take $n\geq r$, 
so that any vertex can be in at most one $A_i$.
Let $C_{ij} (\sigma_n) = \{v: v\rightsquigarrow A_i, \sigma_n(v)\rightsquigarrow A_j\}$. 
Thus, if we think of $v\rightsquigarrow A_i$ as a vertex of type $i$, then $|C_{ij} (\sigma_n)|$ counts the number of type $i$ vertices that get mapped into $A_j$ under the permutation $\sigma_n$.
The basic idea of the proof is that since $W_0$ and $W$ are block constants, the distribution of $\dcut(W^{\sss G_n^{\sigma_n}}, W)$ and $\dcut(W^{\sss G_n^{\tau_n}}, W)$ remains the same if $|C_{ij} (\sigma_n)|=|C_{ij} (\tau_n)|$ for all $i,j$. 
Thus, if $\tau\in \sM$ be such that the number of type-$i$ vertices that get mapped to block $j$ under $\tau$ is approximately $|C_{ij} (\sigma_n)|$, then distributions of  $\dcut(W^{\sss G_n^{\sigma_n}}, W)$ and $\dcut(W^{\sss G_n,\tau}, W)$ are approximately close. Below, we make this intuition precise.

Fix $\bt\in T$, 
where
\begin{eq}
T:=\bigg\{(t_{ij})_{i\in [r], j\in [s]}:t_{ij}\in (0,1), \sum_{i\in [r]}t_{ij} = \frac{1}{r},\sum_{j\in [r]}t_{ij} = \frac{1}{r}\bigg\},
\end{eq}
and let
\begin{eq}
\sM_n(\bt,\eta) = \Big\{\sigma_n\in \sM_n: \frac{|C_{ij} (\sigma_n)|}{n}\in (t_{ij}-\eta,t_{ij} +\eta),\ \forall i,j\in [r], 
\Big\}.
\end{eq}
Thus, $\sM_n(\bt,\eta)$ identifies the class of permutations under which $A_j$ consists roughly of $nt_{ij}$ many type-$i$ vertices (when $\eta$ is small). 
Also, we write 
\begin{eq}
A_{ij} = \bigg[\frac{i-1}{r} +  \sum_{k=1}^{j-1}t_{ik}, \frac{i-1}{r} +  \sum_{k=1}^{j}t_{ik}\bigg).
\end{eq}
Now, consider $\tau\in \sM$ satisfying
\begin{eq}\label{eq:MP-property}
\tau (A_{ij})= A_{ji}, \quad \forall i,j\in [r]. 
\end{eq}
More precisely, we take $\tau $ to be $\tau(x) = c_{ij} +x$ for $x\in A_{ij}$, where $c_{ij}$'s are chosen so that~\eqref{eq:MP-property} is satisfied. 
The map $\tau$ can be understood as follows. The interval $A_{ij}$ contains roughly $nt_{ij}$ many type-$i$ vertices, 
which are the only type-$i$ vertices to get mapped to the interval $A_{j}$. Thus, under $\tau$, $A_j$ contains roughly $nt_{ij}$ many type-$i$ vertices. 
Note also that after $\tau$ has been applied, the labels of vertices of type-$i$ inside each block are ``sorted'' in increasing order.

Next, we claim that, for any $\varepsilon>0$, there exists $\eta=\eta(\varepsilon)>0$ (independent of $\bt$) such that for any $\eta \in (0,\eta (\varepsilon))$, and $\sigma_n \in \sM_n(\bt,\eta)$, there exists a coupling between $\dcut(W^{\sss G_n^{\sigma_n}},W)$ and $\dcut(W^{\sss G_n,\tau},W)$ such that 
\begin{eq}\label{eq:coupling-MP}
\lim_{n\to\infty} \PR\big(\big|\dcut(W^{\sss G_n^{\sigma_n}},W) - \dcut(W^{\sss G_n,\tau},W)\big|>\varepsilon\big) = 0.
\end{eq}
The proof of \eqref{eq:coupling-MP} goes as follows:  Given any composition $\bt\in T$, we choose a ``sorted'' measurable bijection $\tau$ given by \eqref{eq:MP-property}. 
Then we fix $\sigma_n$ which has approximate composition~$\bt$. We re-arrange so that it is also in sorted form within blocks. Finally, we couple these sorted models. 

We write $\tau(v) \rightsquigarrow A_j$ if $\tau([\frac{v-1}{n},\frac{v}{n})) \subset A_j$. 
Let $C_{ij} (\tau) = \{v: v\rightsquigarrow A_i, \tau(v)\rightsquigarrow A_j\}$.
By construction, $||C_{ij} (\tau)| - n t_{ij}|\leq 1$. Also, for any $\sigma_n\in \sM_n(\bt,\eta)$, $||C_{ij} (\sigma_n)\blue{|} - nt_{ij}| \leq 2\eta n$.
Let $n_{ij} = \min\{C_{ij} (\tau),C_{ij} (\sigma_n)\}$.
Thus $A_j$ contains at least $n_{ij}$ many type-$i$ vertices, both under $\sigma_n$ and~$\tau$. 
Let $\sigma_n^0\in \sM_n$ be such that  $\sigma_n^0$ permutes vertices within blocks $A_j$ only, and $\sigma_n^0$ sorts the different types of vertices within blocks in ascending order. 
More formally, $\sigma_n^0$ satisfies, 
\begin{enumerate}[(1)]
    \item For $\sigma_n(u)\rightsquigarrow A_j$, we have $\sigma_n^0\circ \sigma_n(u)\rightsquigarrow A_j$.
    \item For $u\rightsquigarrow A_{i_1}$ and $v\rightsquigarrow A_{i_2}$ with $i_1<i_2$, we have $\sigma_n^0\circ \sigma_n(u)<\sigma_n^0\circ \sigma_n(v)$.
    \item For $u,v \rightsquigarrow A_i$ with $u<v$, we have $\sigma_n^0\circ \sigma_n(u)<\sigma_n^0\circ \sigma_n(v)$.
\end{enumerate}
Now we couple the edges between $n_{ij}$ many vertices between the blocks. 
More precisely, 
$W^{\sss G_n^{\sigma_n}}$ and $W^{\sss G_n, \tau}$ are coupled such that there is an edge between $\sigma_n^0\circ \sigma_n(u)$ and  $\sigma_n^0\circ \sigma_n(v)$ if and only if $W^{\sss G_n, \tau}$ takes value 1 on $\tau([\frac{u-1}{n},\frac{u}{n}))\times \tau([\frac{v-1}{n},\frac{v}{n}))$.
This indeed gives a coupling because an application of permutations such as $\sigma^0_n$ which only permutes the vertices within blocks, does not change the distribution of $\dcut (W^{\sss G_n^{\sigma_n}},W)$.
Note that this coupling does not specify the edges incident to at most $2\eta n + 2$ many vertices. 
This can cause an error of at most $3\eta $ in $L_1$-norm, and hence an error of at most $3\eta $ in the cut-norm. 
Taking $\eta (\varepsilon) = \varepsilon/3$, the proof of \eqref{eq:coupling-MP} follows.

Finally, consider any finite set $(\bld{t}_\alpha)_{\alpha} \subset T$ such that for any $\bld{s}\in T$, there exists $\alpha$ with $\|\bs - \bt_\alpha\|_{\infty} < \eta$.
%
The proof follows by choosing a $\tau$ satisfying \eqref{eq:coupling-MP} for each $\bt_\alpha$.

\end{proof}

\subsection{Proof of Proposition~\ref{thm:ldp-IRG}}\label{sec:proof-IRG-LDP-UB}
Fix $\varepsilon >0$. Recall the setup of Proposition~\ref{thm:ldp-IRG}. Using Lemma~\ref{lem:block-approx-ldp-scale}, it suffices to prove that  there exists $\eta(\varepsilon)>0$ such that for all $\eta \in (0,\eta(\varepsilon))$
\begin{gather}
\limsup_{n\to\infty} \frac{1}{n^2}\log\tilde{\PR}_{n,g_r}(\tbB(\tW,\eta)) \leq - \inf_{f\in \bB(\tW,4\varepsilon) } I_{W_0} (f),
\end{gather}
where $g_r$ is chosen according to Lemma~\ref{lem:block-approx-ldp-scale}.
First, note that
\begin{eq}
\tilde{\PR}_{n,g_r}(\tbB(\tW,\eta)) = \PR_{n,g_r}(\bB(\tW,\eta)).
\end{eq}
Next, we recall a version of Szemer\'edi's regularity lemma from \cite[Theorem 3.1]{Cha17} that will be crucial here (see \cite{szemeredi1978regular} for the original formulation). 
There exists $C(\varepsilon)>0$ and a set  $\sW(\varepsilon)\subset \sW$ with $|\sW(\varepsilon)| \leq C(\varepsilon)$ such that the following holds:
\begin{quote}
    For any $f\in\sW$, there exists $\phi \in \sM$ and $h\in \sW(\varepsilon)$ satisfying $\dcut (f^\phi,h)<\varepsilon$.
\end{quote}
Moreover, for any $h\in \sW(\varepsilon)$, there exists $s\geq 1$ such that $h\in \sW^{\sss (s)}$. 
Without loss of generality, we can additionally assume that the elements of $\sW(\varepsilon)$ are graphons with blocks of equal size. To see this, note that we can approximate each element of $\sW(\varepsilon)$ in $L_2$ by a graphon with equal-sized blocks (see  \cite[Proposition 2.6]{Cha17}).

For empirical graphons corresponding to graphs, the above can be restated as below (see \cite[Theorem 3.1 (iii)]{Cha17}): Recall that $\sM_n$ denotes the set of all permutations of $[n]$, and $G_n^{\sigma_n}$ denotes the graph obtained by relabelling the vertex $i$ by $\sigma_n(i)$, for some $\sigma_n\in \sM_n$. 
Also let us denote $\rB (W,\eta) =  \{W': \dcut(W,W')\leq \eta\}$. 
Then, for any graph $G_n$ on vertex set $[n]$, there exists $\sigma_n\in \sM_n$ and $h\in \sW(\varepsilon)$ such that 
\begin{eq}
W^{\sss G_n^{\sigma_n}} \in \rB(h,\varepsilon).
\end{eq} 
Let $G_n$ be the random graph sampled from the probability distribution $\PR_{n,g_r}$. We define $\rB(\sW(\varepsilon),\varepsilon)=\{g \in \sW: \min_{h \in \sW(\varepsilon)} \dcut(g,h)<\varepsilon\}$, and note that
the above version of the regularity lemma implies that 
\begin{eq}
\{W^{\sss G_n}\in \bB(\tW,\eta) \}&\subseteq \{W^{\sss G_n}\in \bB(\tW,\eta) \} \bigcap \bigg(\bigcup_{\sigma_n\in \sM_n} \{W^{G_n^{\sigma_n}}\in  \rB(\sW(\varepsilon),\varepsilon)\}\bigg) \\
& = \bigcup_{h\in \sW(\varepsilon)}\bigcup_{\sigma_n\in \sM_n}  \{W^{G_n}\in  \bB(\tW,\eta)\}\cap \{W^{G_n^{\sigma_n}}\in \rB(h,\varepsilon)\}.\\
\end{eq}
\noindent Now, $\sW(\varepsilon)$ is a finite set.
Therefore it is enough to show that 
\begin{eq} \label{enough-ub-regularity}
 \limsup_{n\to\infty}&\frac{1}{n^2}\log \PR_{n,g_r}\bigg(\bigcup_{\sigma_n\in \sM_n}\{W^{G_n}\in  \bB(\tW,\eta)\}\cap \{W^{G_n^{\sigma_n}}\in \rB(h,\varepsilon)\}\bigg) \\
 & \qquad \leq - \inf_{f\in \bB(\tW,4\varepsilon) } I_{W_0} (f) ,
\end{eq}
where 
$h\in \sW(\varepsilon)$.
Let $\eta <\varepsilon$.
If the event in \eqref{enough-ub-regularity} is empty, then the bound is trivial. 
In order for the event \eqref{enough-ub-regularity} to be non-empty, we must have that $\delcut (\tW^{G_n},\tW) \leq \eta<\varepsilon$ and $\delcut (\tW^{G_n},\tilde{h})\leq \varepsilon$, so that $\delcut(\tW,\tilde{h}) \leq 2\varepsilon.$
Now, applying Lemma~\ref{lem:relabelling-uniform} yields that the left hand side of \eqref{enough-ub-regularity} is at most
\begin{eq}\label{eq:reduction-ub-1}
 &\limsup_{n\to\infty}\frac{1}{n^2}\log \PR_{n,g_r}\bigg(\bigcup_{\sigma_n\in \sM_n} \{W^{\sss G_n^{\sigma_n}}\in \rB(h,\varepsilon)\}\bigg) \\
 &\leq \limsup_{n\to\infty}\frac{1}{n^2}\max_{\sigma_n\in \sM_n} \log  \PR_{n,g_r}\big( \{W^{\sss G_n^{\sigma_n}}\in \rB(h,\varepsilon)\}\big)\\
 & \leq \limsup_{n\to\infty}\frac{1}{n^2}\max_{\tau \in \T} \log  \PR_{n,g_r}\big( \{W^{\sss G_n,\tau}\in \rB(h,2\varepsilon)\}\big),
\end{eq}where, in the second step, we have also used the fact that $\log n! = o(n^2)$.
Since $\T$ is a finite set, it is now enough to show that for each $\tau\in \T$
\begin{eq}\label{eq:reduction-ub-2}
\limsup_{n\to\infty}\frac{1}{n^2} \log  \PR_{n,g_r}\big( W^{\sss G_n,\tau}\in \rB(h,2\varepsilon)\big)\leq - \inf_{f\in \bB(\tW,4\varepsilon) } I_{W_0} (f).
\end{eq}
Now, by \cite[Lemma 5.4]{Cha17}, $\rB(h,2\varepsilon)$ is closed with respect to the weak topology.
Thus we apply \cite[Theorem 5.1]{Cha17}. 
Although \cite[Theorem 5.1]{Cha17} was stated for the constant graphon, an identical argument could be used to generalize this argument to block constant graphon $g_r$. 
Therefore, \eqref{eq:reduction-ub-2} is at most 
\begin{eq}
 - \inf_{\phi^{-1}\in \sM} \inf_{f\in \rB(h^{\phi},2\varepsilon)} I_{g_r}(f)  \leq -\inf_{f\in \bB(\tilde{h},2\varepsilon)} I_{g_r}(f)  \leq -\inf_{f \in \bB(\tW, 4\varepsilon)} I_{g_r}(f).
\end{eq} 
Now, taking $r\to\infty$, using Lemma~\ref{lem:l1-conv-vont} (note that Lemma~\ref{lem:l1-conv-vont} is stated in terms of $W_0^{n}$, the desired conclusion follows upon substituting $g_r$ in place of $W_0^n$), the proof follows.
  \qed 


\section{Large deviation for uniform graphs with given degree}\label{sec:degrees}
In this section, we complete the proof of Theorem~\ref{thm:ldp-given-degree}.
Using the fact that $(\tsW,\delcut)$ is a compact metric space, it is sufficient 
(see remarks associated to \cite[Theorem 4.5.3]{DZ10}, and \cite[Lemma 4.1]{Cha17}) to show that for any $\tilde{W}\in \tsW$, 
\begin{eq}
\lim_{\eta\to 0}\limsup_{n\to\infty}\frac{1}{n^2} \log\tPRD(\tbB(\tW,\eta)) \leq - J_{D}(\tW)\label{ldp-degree-ub},
\end{eq}and for any $\eta>0$ 
\begin{eq}\liminf_{n\to\infty}\frac{1}{n^2} \log\tPRD(\tbB(\tW,\eta)) \geq - J_{D}(\tW)\label{ldp-degree-lb}.
\end{eq}
\subsection{Key facts from {Chatterjee, Diaconis, Sly \cite{CDS11}}} \label{sec:CDS-facts}
Let us first recall a few key ingredients from \cite{CDS11}, which were used to obtain the graphon limit of $G_{n,\bld{d}}$. 
Let $\hat{\bld{\beta}} = (\hat{\beta}_i)_{i\in [n]}$ be the solution to the system of equations 
\begin{eq}\label{eq:system-of-eqns}
d_i = \sum_{j\neq i} \frac{\e^{\hat{\beta}_i+\hat{\beta}_j}}{1+\e^{ \hat{\beta}_i+\hat{\beta}_j}}, \quad \forall i\in [n].
\end{eq}
Due to \cite[Lemma 4.1]{CDS11}, $\hat{\bld{\beta}}$ exists and $\|\hat{\bld{\beta}}\|_{\infty}\leq C$ for some constant $C>0$ for all sufficiently large $n$ under Assumption~\ref{assmp:degree}. 
It is not obvious that Assumption~\ref{assmp:degree} yields the conditions in \cite[Lemma 4.1]{CDS11}, but that too was shown in the first part of the proof of \cite[Theorem 1.1]{CDS11} in Section 6.2. 
Next, for any $i\neq j$, define 
\begin{eq}\label{beta-model-probab}
\hat{p}_{ij} = \frac{\e^{\hat{\beta}_i+\hat{\beta}_j}}{1+\e^{\hat{\beta}_i+\hat{\beta}_j}},
\end{eq} and let $\hat{G}_n$ be the random graph on vertex set $[n]$ obtained by keeping an edge between vertices $i$ and $j$ with probability $\hat{p}_{ij}$, independently. 
Define 
\begin{eq}\label{defn:W-nd}
W_{n,\bld{d}} (x,y) = 
\begin{cases}
\hat{p}_{ij} \quad &\text{for }x,y \in \big[\frac{i-1}{n},\frac{i}{n}\big)\times \big[\frac{j-1}{n},\frac{j}{n}\big) \text{ and } i\neq j,\\
0 &\text{otherwise}.
\end{cases}
\end{eq}
Since $\|\hat{\bld{\beta}}\|_{\infty} \leq C$, it follows that $(W_{n,\bld{d}})_{n\geq 1}$ is away from the boundary. 
Therefore, the results from Section~\ref{sec:irg} are applicable to $(W_{n,\bld{d}})_{n\geq 1}$.
Next, let $D_n:[0,1]\mapsto [0,1]$ be the step function given by 
\begin{eq}\label{eq:defn-D-n}
D_n(x) = \frac{1}{n}\sum_{j\neq i} \hat{p}_{ij} = \frac{d_i}{n}, \quad \forall x\in \Big[\frac{i-1}{n}, \frac{i}{n}\Big) \text{ and } \forall i \in [n],
\end{eq}
and the degree distribution function is given by 
\begin{eq}
\mu_{D_n}([0,\lambda)) = \Lambda\{x:D_n(x)\leq \lambda \}. 
\end{eq}
By Assumption~\ref{assmp:degree}, $\|D_n-D\|_{\sss L_1} \to 0$, and thus 
\begin{eq}\label{weak-conv-mu}
 \mu_{D_n} \xrightarrow{w} \mu_D,
\end{eq}where $\mu_D$ is defined in \eqref{lim-deg-dist}, where $\xrightarrow{w}$ denotes the weak convergence of measures. 
Define $\beta_n(x) = \sum_{i=1}^n \hat{\beta}_i \mathbbm{1}\{x\in [\frac{i-1}{n}, \frac{i}{n})\}$. 
\citet[page 1430--1432]{CDS11} established that
\begin{eq}\label{eq:graphon-l1-conv}
\|\beta_n - \beta\|_{\sss L_1} \to 0, \quad \|W_{n,\bld{d}}- W_D\|_{\sss L_1} \to 0, 
\end{eq}
 where $\beta$ and $W_D$ are defined in Proposition~\ref{prop:graphon-limit}. 
This fact is critical in our subsequent large deviation analysis.

Next, recall that $\sW_0=\{W\in \sW: \degree_{\tW} = \mu_D\}$ and define $\sW_0^n=\{W\in \sW: \degree_{\tW} = \mu_{D_n}\}$. Note that formally, $\mathrm{deg}_{\tW}$ refers to a cumulative distribution function, and not to the associated probability measure. We use these notions interchangeably, and not overload the notation henceforth. 
Given any graphon $W_0^n \in \sW^{\sss (n)}_{\sss \mathrm{IRG}}$, recall the definition of the probability measure $\PR_{n,W_0^n}$ from Section~\ref{sec:irg}.
Note that, under  $\PR_{n,W_{n,\bld{d}}}$ with $W_{n,\bld{d}}$ given by  \eqref{defn:W-nd}, the probability of producing a particular graph with degree sequence $\bld{d}$ is given by $\e^{\sum_{i\in [n]}\hat{\beta}_{i}d_i}/ \prod_{i<j} (1+\e^{\hat{\beta}_i+\hat{\beta}_j})$. 
Therefore, the conditional law of $\PR_{n,W_{n,\bld{d}}}$, conditionally on degree sequence $\bld{d}$, is uniform among all the graphs with degree sequence $\bld{d}$. More formally, 
\begin{eq}\label{eq:dist-comparison}
\PR_{n,W_{n,\bld{d}}}(\cdot \vert \sW_0^n) = \PR_{n,\bld{d}} (\cdot). 
\end{eq}
Next we quote a key lemma from \cite{CDS11} which will be used in the proof: 
Let $(r_{ij})_{i\neq j}$ satisfy $r_{ij} = r_{ji}$, $r_{ii} = 0$ and $\sum_{j\in [n]\setminus\{i\}}r_{ij} = d_i$, and construct a random graph $G_n$ on the vertex set $[n]$ by keeping an edge between $i$ and $j$ with probability $r_{ij}$.
\begin{lemma}[{\cite[Lemma 6.2]{CDS11}}] \label{lem:CDS-counting}
For all sufficiently large $n$, $G_n$ has degree sequence exactly $\bld{d}$ with probability at least $\e^{-n^{7/4}}$.
\end{lemma}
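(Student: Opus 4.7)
The plan is to establish the bound via a multivariate local central limit theorem (LCLT) for the degree vector $\bld D=(D_1,\dots,D_n)$ of $G_n$. Writing $D_i=\sum_{j\neq i}X_{ij}$ with $X_{ij}\sim\mathrm{Ber}(r_{ij})$ independent, the hypothesis $\sum_{j\neq i}r_{ij}=d_i$ gives $\E[\bld D]=\bld d$, so we need to lower-bound the probability that $\bld D$ hits its mean. The crucial observation is that the required bound $\e^{-n^{7/4}}$ is very weak compared to the $n^{-n/2}\sim\e^{-(n/2)\log n}$ rate produced by an LCLT, so coarse estimates will suffice throughout.

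First, by Fourier inversion,
\begin{equation*}
\PR(\bld D=\bld d)=\frac{1}{(2\pi)^n}\int_{[-\pi,\pi]^n}\phi_n(\bld\theta)\,\e^{-\ri\langle \bld\theta,\bld d\rangle}\dif\bld\theta,\qquad \phi_n(\bld\theta)=\prod_{i<j}\bigl(1+r_{ij}(\e^{\ri(\theta_i+\theta_j)}-1)\bigr).
\end{equation*}
I would split the integration into a bulk $B_n=\{\|\bld\theta\|_\infty\le\delta_n\}$ with $\delta_n=n^{-1/4}$, and its complement. In the bulk, a Taylor expansion of $\log\phi_n$ around the origin gives $\log\phi_n(\bld\theta)=\ri\langle\bld\theta,\bld d\rangle-\tfrac12\bld\theta^\trans\Sigma_n\bld\theta+R(\bld\theta)$, where $\Sigma_n$ is the covariance matrix with $(\Sigma_n)_{ii}=\sum_{j\neq i}r_{ij}(1-r_{ij})$ and $(\Sigma_n)_{ij}=r_{ij}(1-r_{ij})$ for $i\ne j$; the linear term exactly cancels $-\ri\langle\bld\theta,\bld d\rangle$, and the cubic remainder is controlled by $|R(\bld\theta)|\le C\|\bld\theta\|_\infty\cdot\bld\theta^\trans\Sigma_n\bld\theta=o(\bld\theta^\trans\Sigma_n\bld\theta)$. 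The resulting Gaussian integral, combined with Hadamard's inequality $\det\Sigma_n\le\prod_i(\Sigma_n)_{ii}\le n^n$, yields
\begin{equation*}
(\text{bulk})\ \ge\ \frac{c}{n^{n/2}}\ =\ \exp\bigl(-\tfrac{n}{2}\log n+O(1)\bigr).
\end{equation*}

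For the tail bound I would use $|\phi_n(\bld\theta)|^2=\prod_{i<j}\bigl(1-2r_{ij}(1-r_{ij})(1-\cos(\theta_i+\theta_j))\bigr)\le\exp\bigl(-2\sum_{i<j}r_{ij}(1-r_{ij})(1-\cos(\theta_i+\theta_j))\bigr)$ and bound its integral via a dyadic decomposition of $[-\pi,\pi]^n\setminus B_n$ according to the $L^\infty$-scale of $\bld\theta$. Using the identity $\sum_{i,j}\cos(\theta_i+\theta_j)=\mathrm{Re}(\sum_i\e^{\ri\theta_i})^2$ and the fact that $r_{ij}(1-r_{ij})\ge c>0$ (as holds in the application to $r_{ij}=\hat p_{ij}$, via $\|\hat{\bld\beta}\|_\infty\le C$), each dyadic annulus contributes a factor strictly smaller than the bulk. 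Combining the two estimates, the real part of the Fourier integral is dominated by the bulk contribution, giving $\PR(\bld D=\bld d)\ge \e^{-(n/2)\log n+O(1)}\ge\e^{-n^{7/4}}$ for all sufficiently large $n$.

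The main obstacle is the tail: unlike in one-dimensional LCLT, $|\phi_n(\bld\theta)|$ is not uniformly small outside the bulk, and configurations where $\bld\theta$ concentrates on only a few coordinates (e.g.\ $\bld\theta\approx\delta_n e_1$) produce substantially weaker pointwise decay. The wide margin between the target exponent $n^{7/4}$ and the natural rate $(n/2)\log n$ makes it possible to absorb these weaker estimates when integrated against the small volume of such configurations. A sharper bound of order $cn^{-n/2}$ would instead require careful treatment of the near-degenerate direction of $\Sigma_n$ associated with the identity $\sum_iD_i=2|E|$, but this refinement is unnecessary here. A secondary issue is that the lemma as stated does not explicitly assume the $r_{ij}$'s are bounded away from $0$ and $1$; in practice the lemma is applied only when this holds (most notably for $r_{ij}=\hat p_{ij}$), so the assumption is implicit in the context.
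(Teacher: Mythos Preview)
The paper does not prove this lemma: it is quoted verbatim from \cite[Lemma~6.2]{CDS11} and used as a black box (see Section~\ref{sec:CDS-facts}). There is therefore no ``paper's own proof'' to compare against; if you want to see the original argument you should consult \cite{CDS11} directly.

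That said, your Fourier-inversion/LCLT outline is a reasonable route to the stated bound, and the slack between $\e^{-(n/2)\log n}$ and $\e^{-n^{7/4}}$ is indeed the key feature that makes coarse estimates viable. Two points are worth flagging. First, you omit the second bulk region: since $\sum_iD_i$ is always even, $|\phi_n(\bld\theta)|=1$ also at $\bld\theta=(\pi,\dots,\pi)$, and the integrand there equals $\e^{-\ri\pi\sum_id_i}=1$ because $\sum_id_i$ is even. So there is an additional positive contribution of the same order near $(\pi,\dots,\pi)$, which only helps the lower bound; but your tail argument must exclude neighborhoods of \emph{both} points, not just the origin. Second, your remark about a ``near-degenerate direction of $\Sigma_n$ associated with $\sum_iD_i=2|E|$'' is slightly off: this is not a linear constraint (the edge count $|E|$ is random), so $\Sigma_n$ has no zero eigenvalue in the $\bld 1$ direction. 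The genuine lattice issue is the parity constraint just mentioned, which manifests as the second bulk rather than as a degenerate covariance.

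The tail estimate remains the place where real work is hidden; your dyadic sketch is plausible but would need to be made precise, in particular to handle $\bld\theta$ with only a few large coordinates. Given the target exponent, this should be routine.
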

\noindent A direct corollary of Lemma~\ref{lem:CDS-counting} is the following: 
\begin{eq}\label{eq:prob-simple-lb}
\PR_{n,W_{n,\bld{d}}}(\sW_0^n) \geq \e^{- n^{7/4}},
\end{eq}for all sufficiently large $n$. 
We are now ready to prove our $\ldp$ result.

\subsection{Proof of the upper bound {\eqref{ldp-degree-ub}}}\label{sec:ub}
Define the L\'evy-Prokhorov distance \cite{Pro56} between two distribution functions $F_1,F_2$ supported on $[0,1]$ by 
\begin{eq}
d_{\sss \mathrm{LP}} (F_1,F_2) = \inf \big\{\varepsilon>0: F_2(\lambda-\varepsilon) - \varepsilon \leq F_1(\lambda)\leq F_2(\lambda+\varepsilon)+\varepsilon, \ \forall\, \lambda\, \in [0,1]\big\}. 
\end{eq}
This distance can be naturally defined for any two probability measures supported on [0,1] (via their distribution functions), and induces a metric on this space. In fact, $d_{\sss \mathrm{LP}}$ metrizes the weak convergence of probability measures on [0,1] (see \cite{Pro56}). 
Using \cite[Theorem 2.16]{BCCG15}, it follows that 
\begin{eq}\label{bound-prokhorov-metric}
d_{\sss \mathrm{LP}}(\degree_{\tW_1},\degree_{\tW_2}) \leq \big(2\delcut(\tW_1,\tW_2))^{1/2}.
\end{eq}
To prove \eqref{ldp-degree-ub}, we will be assuming that $\tW\in \tsW_0$. If that is not the case, then the logarithm of probability in \eqref{ldp-degree-ub} is $-\infty$ for all sufficiently large $n$ and small $\eta$. 
To see this, suppose $\tW\in \tsW$ is such that $d_{\sss \mathrm{LP}} (\degree_{\tW},\mu_D) = c >0$. 
 Since $d_{\sss \mathrm{LP}} (\mu_{D_n},\mu_D)\to 0$ by \eqref{weak-conv-mu}, it follows that,  for all sufficiently large $n$,
 $d_{\sss\mathrm{LP}}(\degree_{\tW},\mu_{D_n}) \geq c/2$. 
Take $\eta_0 = c^2/32$.  
Now, for any $\tU\in \tbB(\tW,\eta_0)$
\begin{eq}
 \frac{c}{2} \leq d_{\sss \mathrm{LP}}(\degree_{\tW},\mu_{D_n}) \leq d_{\sss \mathrm{LP}}(\degree_{\tW},\degree_{\tU})+ d_{\sss \mathrm{LP}}(\degree_{\tU},\mu_{D_n}) \leq \frac{c}{4} + d_{\sss \mathrm{LP}}(\degree_{\tU},\mu_{D_n}), 
\end{eq}
 for all sufficiently large $n$, where the final step follows from \eqref{bound-prokhorov-metric}.
 Thus $d_{\sss \mathrm{LP}}(\degree_{\tU},\mu_{D_n})\geq c/4$ for all $\tU\in \tbB(\tW,\eta_0)$, and thus $\tPRD (\tbB(\tW,\eta_0)) = 0$.

Therefore, we will assume that $\degree_{\tW} = \mu_D$.
Note that, 
\begin{eq}\label{eq:ub-simple-1}
\tPRD(\tbB(\tW,\eta)) &= \PRD(\bB(\tW,\eta)) = \frac{\PR_{n,W_{n,\bld{d}}}(\bB(\tW,\eta)\cap \sW_0^n) }{\PR_{n,W_{n,\bld{d}}}(\sW_0^n) } \\
&\leq \e^{n^{7/4}} \PR_{n,W_{n,\bld{d}}}(\bB(\tW,\eta)),
\end{eq}where the second equality follows from \eqref{eq:dist-comparison} and the last step follows from \eqref{eq:prob-simple-lb}.
Now, using Proposition~\ref{thm:ldp-IRG}, for any $\varepsilon>0$, there exists $\eta(\varepsilon)>0$ such that for all $\eta\in (0,\eta(\varepsilon))$,  we have
\begin{eq}
\limsup_{n\to\infty}\frac{1}{n^2} \log \tPRD(\tbB(\tW,\eta)) \leq -\inf_{f\in \bB(\tW,4\varepsilon)} I_{W_D}(f)+\varepsilon.
\end{eq}
Consequently, \eqref{ldp-degree-ub} follows upon sending $\eta \to 0$ and then $\varepsilon\to 0$. 

%

%

\subsection{Proof of the lower bound~\eqref{ldp-degree-lb}} \label{sec:ldp-lower-bound}
Fix $\tW\in \tsW$ such that  $\degree_{\tW} = \mu_D$, otherwise the rate function is $-\infty$, and the lower bound is trivial.
Recall that $\sW_0=\{W\in \sW: \degree_{\tW} = \mu_D\}$, and the definition of $\hat{G}_n$ from Section~\ref{sec:CDS-facts}. Define the event $$\cE_n = \{\exists g\in \sW_0\text{ with } \delcut(\tg,\tW) \leq \eta \text{ such that } \delcut(\tW^{\sss G_{n,\bld{d}}}, \tg)\leq \eta\}.$$
Note that, if $\cE_n$ happens, then $\delcut(\tW^{\sss G_{n,\bld{d}}},\tg)\leq \eta$, and therefore, by the triangle inequality, $\delcut(\tW^{\sss G_{n,\bld{d}}},\tW)\leq 2\eta$. 
Next, note that for any collection of events $(A_\alpha)_{\alpha\in \cA}$, $\PR(\cup_{\alpha\in \cA }A_{\alpha}) \geq \max_{\alpha \in \cA} \PR(A_{\alpha})$.
Thus, we have
\begin{eq}\label{lower-bound-reduction-sup}
\tPRD(\tbB(\tW,2\eta)) \geq \tPRD(\cE_n) \geq  \sup_{g\in \bB (\tW,\eta) \cap \sW_0} \tPRD (\tbB(\tilde{g},\eta)).
\end{eq}
The lower bound on \eqref{lower-bound-reduction-sup} may seem artificial at first, but its technical  significance will become clear later in \eqref{eq:lb-wo-envelope}, \eqref{eq:lb-with-envelope}, while proving  the LDP lower bound in terms of the lower semi-continuous envelope $J_{W_D}$.
Our focus will be to lower bound $\tPRD (\tbB(\tilde{g},\eta))$.
The following lemma is a crucial ingredient which states that graphons with any fixed degree distribution function can be approximated by piecewise constant graphons with approximately the same degree function. 
We first state this lemma and complete the proof of the lower bound. The proof of the lemma is given at the end of this section.
Recall the definition of $\sW_{\sss \mathrm{IRG}}^{\sss (n)}$ from Section~\ref{sec:irg}.
For $h_n\in \sW_{\sss \mathrm{IRG}}^{\sss (n)}$, let $(h_{ij})_{i,j\in [n]}$ be the values of $h_n$ on the blocks $S_{ij}$, where $S_{ij}:=[\frac{i-1}{n},\frac{i}{n}) \times[\frac{j-1}{n},\frac{j}{n})$.
%
For any $\sigma_n \in \sM_n$, we define the graphon $h_n^{\sigma_n}$ by $h_n^{\sigma_n} (x,y) = h_{\sigma_n (i) \sigma_n (j)}$ for all $x,y\in S_{ij}$, $i,j\in [n]$.

\begin{lemma}\label{lem:construction-step-function}
Let $g\in \sW_0$, i.e., $\deg_{\tilde{g}} = \mu_D$.  
Further, let $D_n$ be a step function of the form \eqref{eq:defn-D-n} such that $\|D_n-D\|_{\sss L_1} \to 0$.
There exist graphons $(g_n)_{n\geq 1}$ and $(\sigma_{0n})_{n\geq 1}$ with $\sigma_{0n}\in \sM_n$ such that $\|g_n^{\sigma_{0n}}-g\|_{\sss L_1} \to 0$, and there exists an $n_0$ (independent of $g$) such that for all $n\geq n_0$, we have  $\int_0^1 g_n(x,y) \dif y= D_n(x)$, and 
\begin{eq} \label{step-approx-graphon}
g_{n} (x,y) = 
\begin{cases}
g_{ij}, \quad &x,y \in \big[\frac{i-1}{n},\frac{i}{n}\big)\times \big[\frac{j-1}{n},\frac{j}{n}\big), \quad i\neq j\\
0 &\text{otherwise}.
\end{cases}
\end{eq}
where $n^{-1}<g_{ij}<1-n^{-1}$.
\end{lemma}

Next, since $\|D_n-D\|_{\sss L_1} \to 0$ by Assumption~\ref{assmp:degree}, using Lemma~\ref{lem:construction-step-function}, we can construct a function $g_n$ with $\delcut(\tg_n,\tg) \to 0$ such that \eqref{step-approx-graphon} holds,
and $\sum_{j \in [n] \setminus \{i\}} g_{ij} = d_i$ for all $i\in [n]$.
Also let $G_n$ denote the graph on vertex set $[n]$, where an edge between vertices $i$ and $j$ are kept with probability $ h_{ij} =g_{\sigma_{0n}(i) \sigma_{0n}(j)}$, independently, where $\sigma_{0n} \in \sM_n$ is given by Lemma~\ref{lem:construction-step-function}. Let $\PR_{n,h_n}$ denote the distribution of $W^{\sss G_n}$. 
By our construction in Lemma~\ref{lem:construction-step-function}, we have that $\|h_n - g\|_{\sss L_1} \to 0$.
Using \eqref{eq:dist-comparison}, we can write
\begin{eq}\label{split-up-g-n}
&\tPRD(\tbB(\tilde{g},\eta ))  = \frac{\PR_{\sss n, W_{n,\bld{d}}}(\bB(\tilde{g},\eta )\cap \sW_0^n)}{\PR_{\sss n, W_{n,\bld{d}}}(\sW_0^n)}\geq  \int_{\bB(\tilde{g},\eta)\cap \sW_0^n} \dif \PR_{n,W_{n,\bld{d}}} \\
&=\int_{\bB(\tilde{g},\eta ) \cap \sW_0^n} \e^{-\log \frac{\dif \PR_{n,h_n}}{\dif \PR_{\sss n,W_{n,\bld{d}}}}} \dif \PR_{n,h_n} \\
& =  \PR_{n,h_n}(\bB(\tilde{g},\eta )  \cap \sW_0^n) \frac{1}{\PR_{n,h_n}(\bB(\tilde{g},\eta ) \cap \sW_0^n)} \int_{\bB(\tilde{g},\eta ) \cap \sW_0^n} \e^{-\log \frac{\dif \PR_{n,h_n}}{\dif \PR_{n,W_{n,\bld{d}}}}} \dif \PR_{n,h_n}.
\end{eq}
Now, taking logarithms and using Jensen's inequality, the above is at least 
\begin{eq}\label{eq:simple-jensen}
& \log \PR_{n,h_n}(\bB(\tilde{g},\eta ) \cap \sW_0^n) - \frac{1}{\PR_{n,h_n}(\bB(\tilde{g},\eta ) \cap \sW_0^n)} \int_{\bB(\tilde{g},\eta ) \cap \sW_0^n} \log \frac{\dif \PR_{n,h_n}}{\dif \PR_{n,W_{n,\bld{d}}}} \dif \PR_{n,h_n}.
\end{eq}
Denote the two terms above by $(I)$ and $(II)$ respectively. 
To deal with the term $(I)$, we need the following lemma:
\begin{lemma}\label{lem:conditional-convergence}
For any $\eta>0$, as $n\to\infty$, 
\begin{eq} \label{eq:cond-conv-irg}
\PR_{n,h_n} (\bB(\tilde{g},\eta) \vert \sW_0^n) \to 1.
\end{eq}
\end{lemma}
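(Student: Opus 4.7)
The plan is to couple an unconditional concentration estimate for $W^{G_n}$ about $g_n$ in the cut metric with the explicit lower bound on $\PR_{n,g_n}(\sW_0^n)$ coming from Lemma~\ref{lem:CDS-counting}, and then conclude by a one-line Bayes calculation.

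By the construction in Lemma~\ref{lem:construction-step-function}, the edge probabilities $r_{ij}$ of $G_n \sim \PR_{n,g_n}$ are symmetric, vanish on the diagonal, and satisfy $\sum_{j\neq i} r_{ij} = d_i$ for every $i\in [n]$. Consequently Lemma~\ref{lem:CDS-counting} applies verbatim to this inhomogeneous random graph and gives
\begin{equation*}
\PR_{n,g_n}(\sW_0^n) \geq e^{-n^{7/4}}
\end{equation*}
for all sufficiently large $n$.

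For the unconditional piece, note that since $g_n$ is piecewise constant on the same blocks $[(i-1)/n,i/n)\times [(j-1)/n,j/n)$ that define the empirical graphon $W^{G_n}$, the identity labeling already witnesses the infimum in \eqref{eq:def-cut-metric}, hence $\delcut(W^{G_n},g_n)\leq \dcut(W^{G_n},g_n)$. Viewed as a function of the $\binom{n}{2}$ independent edge indicators, $\dcut(W^{G_n},g_n)$ changes by at most $2/n^2$ when a single edge is flipped, so McDiarmid's bounded-differences inequality yields
\begin{equation*}
\PR_{n,g_n}\!\big(\dcut(W^{G_n},g_n) > \E_{n,g_n}[\dcut(W^{G_n},g_n)] + t\big) \leq e^{-c n^2 t^2}
\end{equation*}
for a universal $c>0$. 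A standard first-moment bound on the cut norm of a centered, independent-edge perturbation (an $\varepsilon$-net over indicators of rectangles, or Grothendieck's inequality together with $\E[\|A - \E A\|_{\infty \to 1}]=O(n^{3/2})$) shows $\E_{n,g_n}[\dcut(W^{G_n},g_n)] = o(1)$. Combining the two displays, for every fixed $\eta>0$ and all $n$ large,
\begin{equation*}
\PR_{n,g_n}\!\big(\bB(\tilde g_n,\eta)^c\big) \leq \PR_{n,g_n}\!\big(\dcut(W^{G_n},g_n) > \eta\big) \leq e^{-c(\eta)n^2}.
\end{equation*}

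The proof is then completed by Bayes:
\begin{equation*}
\PR_{n,g_n}\!\big(\bB(\tilde g_n,\eta)^c \,\big|\, \sW_0^n\big) \leq \frac{\PR_{n,g_n}\!\big(\bB(\tilde g_n,\eta)^c\big)}{\PR_{n,g_n}(\sW_0^n)} \leq \exp\!\big(-c(\eta)n^2 + n^{7/4}\big) \longrightarrow 0,
\end{equation*}
since $n^2 \gg n^{7/4}$. The only delicate step is the cut-norm concentration at the $e^{-c n^2}$ scale; once this unconditional estimate is in place, the comparison against the explicit $e^{-n^{7/4}}$ lower bound from Lemma~\ref{lem:CDS-counting} makes the conditioning harmless. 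The other ingredients (McDiarmid for cut norm, $\E[\dcut] = o(1)$ for inhomogeneous random graphs) are classical and do not require the inhomogeneity of $g_n$ to be controlled beyond uniform boundedness, which is automatic.
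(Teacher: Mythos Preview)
Your proof is correct and follows the same architecture as the paper's: an unconditional $e^{-cn^2}$ concentration estimate for $W^{G_n}$ near $g_n$, combined with the $e^{-n^{7/4}}$ lower bound on $\PR_{n,g_n}(\sW_0^n)$ from Lemma~\ref{lem:CDS-counting}, followed by the one-line Bayes ratio. The only cosmetic difference is that you concentrate $\dcut(W^{G_n},g_n)$ directly via McDiarmid plus a first-moment bound on the cut norm, whereas the paper concentrates each subgraph density $t(F,W^{G_n})$ via Azuma and then appeals to the equivalence between subgraph-density convergence and cut-metric convergence; your route is arguably a touch cleaner since it sidesteps that reduction.
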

\begin{proof}
We denote the random graph sampled according to probability measures $\PR_{n,h_n}(\cdot)$ by~$G_n$,  $h_n = g_n^{\sigma_{0n}}$, and recall the definition of subgraph densities from Definition~\ref{defn:subgraph-dinsity}. 
Since $\delcut(\tg_n,\tg)\to 0$, it follows using \cite[Lemma 10.23]{L2012} that $t(F,g_n) \to t(F,g)$ for any finite simple graph $F$.
It is enough to show that, $t(F,W^{G_n}) \to t(F,g)$ almost surely with respect to the measure $\bigotimes_{n\geq 1}\PR_{n,h_n} (\cdot \vert \sW_0^n)$ for any fixed finite simple graph $F$, since then the proof will follow using \cite[Lemma 10.32]{L2012}.

First,  $\E_{n,h_n}[t(F,W^{\sss G_n})]  = t(F,g_n)\to t(F,g)$, and a standard argument  using the bounded difference inequality (cf.~\cite[Lemma 6.1]{CDS11})
yields for any $\varepsilon >0$
\begin{eq} \label{eq:density-concentration}
\PR_{n,h_n} (|t(F,W^{\sss G_n}) - \E_{n,h_n}[t(F,W^{\sss G_n})]|>\varepsilon) \leq 2\e^{-C\varepsilon^2n^2},
\end{eq}
for some constant $C>0$.
Now, recall that $\sum_{j\neq i}g_{ij} = d_i$ by construction. 
We aim to apply Lemma~\ref{lem:CDS-counting}. 
$h_{ij}$ is obtained from $g_{ij}$ by vertex relabelling, and thus Lemma~\ref{lem:CDS-counting} is also applicable to $G_n$.
Thus, it follows that
\begin{eq}
&\PR_{n,h_n} (|t(F,W^{\sss G_n}) - \E_{n,h_n}[t(F,W^{\sss G_n})]|>\varepsilon\vert \sW_0^n) \\
&\quad = \frac{\PR_{n,h_n} (\{|t(F,W^{\sss G_n}) - \E_{n,h_n}[t(F,W^{\sss G_n})]|>\varepsilon\}\cap \sW_0^n)}{\PR_{n,h_n}(\sW_0^n)} \\
&\quad \leq \frac{\PR_{n,h_n} (|t(F,W^{\sss G_n}) - \E_{n,h_n}[t(F,W^{\sss G_n})]|>\varepsilon)}{\PR_{n,h_n}(\sW_0^n)} \leq 2\e^{-Cn^2},
\end{eq}for some constant $C>0$. Now the required almost sure convergence follows using the Borel–Cantelli lemma.
This completes the proof.
\end{proof}
\noindent {\it Completing the proof of the lower bound.}
Note that, by Lemmas~\ref{lem:CDS-counting} and \ref{lem:conditional-convergence}, the term $(I)$ in \eqref{eq:simple-jensen} simplifies to
\begin{eq}\label{eq:simple-I}
(I) =\log \PR_{n,h_n}(\bB(\tilde{g},\eta ) \cap \sW_0^n)= \log \PR_{n,h_n}( \sW_0^n) + o(1) \geq - C n^{7/4} = o(n^2), 
\end{eq}for some constant $C>0$.
To analyze term $(II)$, firstly note that 
\begin{eq}\label{eq:lb-term-2-exact}
\log \frac{\dif \PR_{n,h_n}}{\dif \PR_{n,W_{n,\bld{d}}}}= \sum_{1\leq i<j\leq n} \bigg(I_{ij} \log\Big(\frac{h_{ij}}{\hat{p}_{ij}}\Big)+(1-I_{ij}) \log\Big(\frac{1-h_{ij}}{1-\hat{p}_{ij}}\Big)\bigg),
\end{eq}where $I_{ij} \sim \mathrm{Ber}(h_{ij})$ independently, and $\hat{p}_{ij}$ is defined in \eqref{beta-model-probab}. 
By changing one $I_{ij}$, this quantity can change by at most \begin{eq}\label{sup-entropy}
\max_{i,j} \Big| \log\Big(\frac{h_{ij}}{\hat{p}_{ij}}\Big) \Big| + \Big| \log\Big(\frac{1-h_{ij}}{1-\hat{p}_{ij}}\Big)\Big|\leq C \log n,
\end{eq} using the condition from Lemma~\ref{lem:construction-step-function} that $n^{-1}<g_{ij}<1-n^{-1}$, and $W_D$ (and thus also $(W_{n,\bld{d}})_{n\geq 1}$)  is away from the boundary.
Therefore, an application of Azuma-Hoeffding inequality \cite[Theorem 2.8]{boucheron2013concentration} yields  
\begin{eq}\label{eq:bad-event}
\PR_{n,h_n} \bigg(\bigg|\log \frac{\dif \PR_{n,h_n}}{\dif \PR_{n,W_{n,\bld{d}}}} - \E_{n,h_n}\Big[\log \frac{\dif \PR_{n,h_n}}{\dif\PR_{n,W_{n,\bld{d}}}}\Big]\bigg|>\varepsilon_n  n^2\log n\bigg) \leq 2\e^{-C' \varepsilon_n^2 n^2},
\end{eq}for some constant $C'>0$ which depends on the constant in \eqref{sup-entropy}.
We denote the event in~\eqref{eq:bad-event} by $\mathcal{A}_n$. 
Take $\varepsilon_n  = n^{-1/10}$.
Note that, on $\mathcal{A}_n^c$, 
\begin{eq}
\log \frac{\dif \PR_{n,h_n}}{\dif \PR_{n,W_{n,\bld{d}}}} 
& \leq \E_{n,g_n}\Big[\log \frac{\dif \PR_{n,h_n}}{\dif \PR_{n,W_{n,\bld{d}}}}\Big]+ n^{19/10}.
\end{eq}
Also note that, by \eqref{sup-entropy}, the log derivative $\log \frac{\dif \PR_{n,h_n}}{\dif \PR_{n,W_{n,\bld{d}}}}$ is at most $Cn^2 \log n.$
Therefore, 
\begin{eq}\label{eq:II-1}
(II) &\leq \frac{1}{\PR_{n,h_n}(\bB(\tilde{g},\eta ) \cap \sW_0^n)} \int_{\sW_0^n} \log \frac{\dif \PR_{n,h_n}}{\dif \PR_{n,W_{n,\bld{d}}}} \dif \PR_{n,h_n} \\
& \leq \frac{1}{\PR_{n,h_n}(\bB(\tilde{g},\eta ) \cap \sW_0^n)}\bigg(\E_{n,h_n}\Big[\log \frac{\dif \PR_{n,h_n}}{\dif \PR_{n,W_{n,\bld{d}}}}\Big]+ n^{19/10}\bigg) \PR_{n,h_n}(\sW_0^n) \\
&\qquad + \frac{(2Cn^2 \log n)\e^{-C'n^{9/5}}}{\PR_{n,h_n}(\bB(\tilde{g},\eta ) \cap \sW_0^n)}\\
&= \frac{1}{\PR_{n,h_n}(\bB(\tilde{g},\eta ) \vert \sW_0^n)}\bigg(\E_{n,h_n}\Big[\log \frac{\dif \PR_{n,h_n}}{\dif \PR_{n,W_{n,\bld{d}}}}\Big]+ o(n^{2})\bigg)  +o(n^2) \\ 
&= (1+o(1)) \E_{n,h_n}\Big[\log \frac{\dif \PR_{n,h_n}}{\dif \PR_{n,W_{n,\bld{d}}}}\Big]+ o(n^{2}),
\end{eq}
where the last-but-one step follows applying  \eqref{eq:simple-I}, and the last step use Lemma~\ref{lem:conditional-convergence}. 
Further, since $\|h_n - g\|_{\sss L_1} \to 0$, we also have $\|h_n - g\|_{\sss L_2} \to 0$ since $h_n$, $g$ takes values in a bounded interval $[0,1]$, and consequently,  \eqref{eq:lb-term-2-exact} yields that 
\begin{eq}\label{eq:II-2}
\lim_{n\to\infty}\frac{1}{n^2}\E_{n,h_n}\Big[\log \frac{\dif \PR_{n,h_n}}{\dif \PR_{n,W_{n,\bld{d}}}}\Big] = I_{W_D}(g).
\end{eq}
See also \cite[Lemma 5.7]{Cha17} for more details for proving an analogue of \eqref{eq:II-2} with $W_D = p$.
The argument here is identical.
Thus, combining  \eqref{eq:simple-I}, \eqref{eq:II-1} and \eqref{eq:II-2}, we have 
\begin{eq}\label{eq:lb-wo-envelope}
\liminf_{n\to\infty} \frac{1}{n^2} \log \tPRD(\tbB(\tilde{g},\eta )) \geq - I_{W_D}(g).
\end{eq}
Thus, 
 \eqref{lower-bound-reduction-sup} yields that
 \begin{eq}\label{eq:lb-with-envelope}
 &\liminf_{n\to\infty}\frac{1}{n^2} \log \tPRD(\tbB(\tW,2\eta )) \blue{\geq} \liminf_{n\to\infty} \sup_{g\in \bB (\tW,\eta) \cap \sW_0} \frac{1}{n^2} \log \tPRD (\tbB(\tilde{g},\eta))  \\
 &\geq  \sup_{g\in \bB (\tW,\eta) \cap \sW_0} \liminf_{n\to\infty} \frac{1}{n^2} \log \tPRD (\tbB(\tilde{g},\eta))\geq - \inf_{g\in \bB(\tilde{W},\eta) \cap \sW_0} I_{W_D}(g), 
 \end{eq}
  which concludes the proof of the lower bound in \eqref{ldp-degree-lb}.
\qed 
\\

It remains to prove Lemma~\ref{lem:construction-step-function}. 
To this end, we will need the following ingredient: For any Borel measurable function $g:[0,1] \mapsto [0,1]$, let $m_g(z) = \Lambda(\{y:g(y) > z\})$, where we recall that $\Lambda$ is the Lebesgue measure. 
\begin{lemma}\label{lem:monotone-function-L1}
Let $(f_n)_{n\geq 1}$ and $f$ be such that $f_n,f: [0,1]\mapsto [0,1]$ are non-increasing, Borel measurable functions. 
Suppose that $\lim_{n\to\infty}m_{f_n}(z) = m_f(z)$ for all continuity points $z$ of $m_f$.
Then, as $n\to\infty$, $\|f_n-f\|_{\sss L_1} \to 0$.
\end{lemma}
\begin{proof}
For any Borel measurable function $g:[0,1] \mapsto [0,1]$, the monotone rearrangement is defined as $g^*(x) = \inf\{z: m_g(z)\leq x\}$.
We will prove the following two facts about the monotone rearrangement:
\begin{fact}\label{fact:mon-re-dec}
If $f$ is non-increasing, then $f^* = f$ almost surely.
\end{fact}
\begin{fact}\label{fact:conv-mon-re}
If $m_{f_n}(z) \to m_f(z)$ for all continuity points $z$ of $m_f$, then $f_n^* \to f^*$ almost surely, as $n\to\infty$.
\end{fact}
Using Facts~\ref{fact:mon-re-dec},~and~\ref{fact:conv-mon-re}, it follows that $f_n \to f$ almost surely. 
Thus the proof follows by the dominated convergence theorem. 
\end{proof}
\begin{proof}[Proof of Fact~\ref{fact:mon-re-dec}]
Since $f$ is non-increasing, we have that $\{y:f(y)>f(x)\} \subset \{y:y\leq x\}$.
This implies $m_f(f(x)) = \Lambda(\{y:f(y)>f(x)\}) \leq x$, and thus $f^*(x) = \inf \{z: m_f(z) \leq x\} \leq f(x)$. 
Now, let $x$ be a continuity point of $f$, and fix $\varepsilon>0$. 
Then, $m_f(f(x) -\varepsilon) = \Lambda (\{y:f(y)>f(x) -\varepsilon\})>x$.
Now, since $m_f$ is non-increasing, whenever $m_f(z) \leq x$, we have $z>f(x) - \varepsilon$. This implies that $f^*(x) \geq f(x) -\varepsilon$, and thus $f^*(x) =f(x)$ whenever $x$ is a continuity point of $f$. 
Now, the proof follows using the fact that any non-increasing function can only have countably many points of discontinuity.
\end{proof}
\begin{proof}[Proof of Fact~\ref{fact:conv-mon-re}]
First note that, whenever $z_n\searrow z$, we have  $m_{f}(z_n) \nearrow m_f(z)$, and thus $m_f$ is right-continuous. 
Next, for any $z< f^*(x)$, we have $m_f(z)>x$. 
Let $z$ be a continuity point of $m_f$.
Since $m_{f_n}(z) \to m_f(z)$, for all sufficiently large $n$, we have $m_{f_n} (z)>x$, and thus $z\leq \liminf_{n\to\infty}f_n^*(x)$. 
Therefore, $\liminf_{n\to\infty}f_n^*(x) \geq f^*(x)$.

Next, let $x$ be a continuity point of $f^*$, i.e., for all $\varepsilon>0$, there exists a $\delta>0$ such that $f^*(x-\delta) <f^*(x)+\varepsilon$.
Define $\xi = \limsup_{n\to\infty} f_n^*(x)$. 
Then, there exists $(n_k)_{k\geq 1}\subset \N$ such that for all $k\geq 1$, $f_{n_k}^*(x)> \xi -\varepsilon$, and thus $m_{f_{n_k}}(\xi -\varepsilon)>x$.
Now, since $m_f$ has countably many points of discontinuity, we can choose $\varepsilon>0$ such that $\xi -\varepsilon$ is a continuity point of $m_f$. 
This implies that $m_f(\xi -\varepsilon) \geq x>x-\delta$, and thus $f^*(x-\delta) > \xi - \varepsilon$. 
Thus, $f^*(x) > \xi - 2\varepsilon = \limsup_{n\to\infty} f_n^*(x) -2\varepsilon$.
The proof again follows using the fact that $f^*$ can have only countably many points of discontinuity.
\end{proof}

\begin{proof}[Proof of Lemma~\ref{lem:construction-step-function}]
Recall that $\sW^{\sss (n)}_{\sss \mathrm{IRG}}$ denotes the collection of piecewise constant graphons defined below \eqref{eq:piecewise-const}, and also that $S_{ij}:=[\frac{i-1}{n},\frac{i}{n}) \times[\frac{j-1}{n},\frac{j}{n})$. For $W\in \sW^{\sss (n)}_{\sss \mathrm{IRG}}$, we write $W_{ij}$ to denote the value of $W$ on $S_{ij}$. 
In order to produce a block-constant graphon that is close to $g$ and degree function exactly equal to $D_n$, we proceed via following steps.
\\

\noindent \textbf{Step 1: Approximation by block constant graphons.}
For $1\leq i\neq j\leq n$, define 
\begin{eq}\label{step}
g_{n1}(x,y) = n^2 \int_{S_{ij}} g(u,v)\dif u\dif v, \quad \forall (x,y) \in S_{ij},
\end{eq}
and $g_{n1}(x,y) = 0$ otherwise.
A standard argument implies that $\|g_{n1}-g\|_{\sss L_2} \to 0$ (see  \cite[Proposition 2.6]{Cha17}).
Since $g$ is bounded, we also have $\|g_{n1}-g\|_{\sss L_1} \to 0$. \\

\noindent \textbf{Step 2: $L_1$-approximation of the degree function.}
Let $\sigma_n$ be  any permutation such that $(\sum_{j\in [n]}g_{n1, \sigma_n(i)\sigma_n(j)})_{i\in [n]}$ is non-increasing, and let $g_{n2}$ take value $g_{n1,\sigma_n(i)\sigma_n(j)}$ on $S_{ij}$. 
Since $\|g_{n1}-g\|_{\sss L_1} \to 0$, it follows that $ \|g_{n2}^{\sigma_{0n}}- g\|_{\sss L_1} \to 0$, where $\sigma_{0n}$ is the inverse permutation of~$\sigma_n$. 
Let $f_{n2} (x)= \int_0^1 g_{n2} (x,y) \dif y$, which is now non-increasing by our construction.
Using~\eqref{bound-prokhorov-metric}, we can now apply Lemma~\ref{lem:monotone-function-L1} with $(f_{n2})_{n\geq 1}$ and $D$.
Thus, we have $\|f_{n2}-D\|_{\sss L_1} \to 0$.
Recall that $\|D_n-D\|_{\sss L_1} \to 0$ by Assumption~\ref{assmp:degree}, and thus it follows that $\|f_{n2}-D_n\|_{\sss L_1} \to 0$. \\

\noindent \textbf{Step 3: $L_\infty$-approximation of the degree function.}
By Markov's inequality, there exists $(\varepsilon_n)_{n\geq 1}$ with $\varepsilon_n\to 0$ such that 
\begin{eq}
\frac{|V_{\sss \mathrm{ex}}|}{n} \to 0,\quad \text{where } V_{\sss \mathrm{ex}}:=\bigg\{i: \bigg|\sum_{j\in [n]} g_{n2,ij}-d_i\bigg|>n\varepsilon_n\bigg\}. 
\end{eq}
Let $g_{n3}(x,y) = \hat{p}_{ij}$ if $(x,y)\in S_{ij}$ with  $i\in V_{\sss\mathrm{ex}}$ or  $j\in V_{\sss\mathrm{ex}}$, and $g_{n3}(x,y) = g_{n2}(x,y)$ otherwise. 
This changes at most $2n|V_{\sss \mathrm{ex}}|$ block values of $g_{2n}$, and therefore, $\|g_{n3} - g_{n2}\|_{\sss L_1} \leq 2|V_{\sss \mathrm{ex}}|/n \to 0$.
To campare the degree functions, note that for $i\in V_{\sss \mathrm{ex}}$, we have $\sum_{j\in [n]} g_{n3,ij}=d_i$, and for $i\notin V_{\sss \mathrm{ex}}$, we have $|\sum_{j\in [n]} g_{n3,ij} - d_i| \leq n \varepsilon_n + |V_{\sss \mathrm{ex}}|$. 
Thus, if $f_{n3} (x)= \int_0^1 g_{n3} (x,y) \dif y$, then $\|f_{n3} - D_n\|_{\sss L_{\infty}} \to 0$ by our construction. \\

\noindent \textbf{Step 4: Truncation away from 0,1.} 
Let $\delta_n:= \max\{\|f_{n3} - D_n\|_{\sss L_{\infty}} , n^{-1/2}\}$. 
Define  on $S_{ij}$ for $i\neq j$,
\begin{eq}
g_{n4} (x,y):= 
\begin{cases}
 g_{n3}(x,y) & \quad\text{ if }\quad  \delta_n \leq g_{n3}(x,y) \leq 1- \delta_n \\
\frac{1}{n}  & \quad \text{ if }\quad  g_{n3}(x,y)<\delta_n  \\
  1-\frac{1}{n}  & \quad \text{ if }\quad g_{n3}(x,y)>1-\delta_n .
\end{cases} 
\end{eq}
and $g_{n4}(x,y) = 0$ on $S_{ii}$ for all $i$.
By construction, $\|g_{n4} - g_{n3}\|_{\sss L_{\infty}} \leq \delta_n $ and hence $\|f_{n4} - D_n\|_{\sss L_{\infty}} \leq \|f_{n4} - f_{n3}\|_{\sss L_{\infty}} + \|f_{n3} - D_n\|_{\sss L_{\infty}} \leq 2\delta_n$, where $f_{n4} (x)= \int_0^1 g_{n4} (x,y) \dif y$. \\

\noindent \textbf{Step 5: Producing a graphon with exact degree function $D_n$.}
We need the following: 
\begin{fact}\label{fact:solution-fixed-sum}
Given any sequence $a= (a_i)_{i\in [n]}$, it is possible to find weights  $w= (w_{ij})_{i,j}$ with $w_{ij}=w_{ji}$ for all $i,j$, such that $\sum_{j\in [n]\setminus \{i\}}w_{ij} = a_i$, and $w_{ii} = 0$ for all $i\in [n]$, and 
\begin{eq}\label{linfty-w-ij}
\|w\|_{\infty} \leq  \frac{\|a\|_{\infty} }{n-2}+\frac{\|a\|_1}{(n-1)(n-2)}.
\end{eq} 
\end{fact}
Let us first complete the proof of   
Lemma~\ref{lem:construction-step-function}; the proof of Fact~\ref{fact:solution-fixed-sum} is given subsequently. 
Note that $D_n-f_{n4}$ is a step function with constant values in $[\frac{i-1}{n},\frac{i}{n})$ for all $i$.  
We take $a_i$ to be the value of $n(D_n-f_{n4})$ on $[\frac{i-1}{n},\frac{i}{n})$. 
Now, we choose $w$ according to Fact~\ref{fact:solution-fixed-sum}, and define 
\begin{eq}
g_{n5} (x,y) =
\begin{cases}
 g_{n4} (x,y) + w_{ij} \quad &\forall (x,y) \in S_{ij}, i\neq j,\\
\quad 0  &\text{otherwise.}
\end{cases}
\end{eq}
Recall the bounds from Step 4. Since $\|a\|_{\infty} \leq n\delta_n$ and $\|a\|_{1} \leq n \|a\|_{\infty} \leq n^2 \delta_n$, we have from \eqref{linfty-w-ij} that $\|w\|_{\infty} \leq 3\delta_n$.
Thus, $\|g_{n5} -g_{n4}\|_{\sss L_\infty} \to 0$, and moreover  $\int_0^1 g_{n5} (x,y)\dif y = D_n(x)$. However, $g_{n5}$ can take values in $[-3\delta_n,1+3\delta_n]$ and $\delta_n \leq n^{-1/2}$. 
Finally we define $g_n = n^{-1/4} W_{\sss n,\bld{d}}+(1-n^{-1/4}) g_{n5}$. Since $W_{\sss n,\bld{d}}$ is away from boundary, it follows that $g_{n}$ takes values in $[\frac{1}{n},1-\frac{1}{n}]$ for all sufficiently large $n$, and also $\int g_{n} (x,y)\dif y = D_n(x)$. This completes the proof of Lemma~\ref{lem:construction-step-function}. 
\end{proof}

\begin{proof}[Proof of Fact~\ref{fact:solution-fixed-sum}]
Let us view $w$ as a vector with its elements indexed by $(j,k)$, $j<k$. We wish to find a solution of $w$ in the equation $Mw = a$, where $M$ is an $n \times {n \choose 2}$ matrix with entries $m_{i, (j,k)} = \ind{i\in \{j,k\}}$.
First let us find the inverse of $MM^T$. Indeed, 
\begin{eq}
(MM^T)_{uv} = \sum_{j<k}\ind{u\in \{j,k\}}\ind{v\in \{j,k\}} = \begin{cases}
 1 &\text{ if } u\neq v,\\
 n-1 &\text{ if }u=v.
\end{cases}
\end{eq}
Thus $MM^T = (n-2)I + 1 1^T$. An application of Sherman-Morrison formula (see e.g. \cite{hager1989updating}) yields that 
\begin{eq}
(MM^T)^{-1} = \frac{I}{n-2} - \frac{11^T}{2(n-1)(n-2)}. 
\end{eq}
Now, $w = M^T (MM^T)^{-1} a$ is a solution to the equation $Mw = a$. 
Also, the $(j,k)$-th column of $M$ consists of 1 on the $j$-th and $k$-th entries and zero elsewhere. 
Hence, we observe that $\|w\|_{\infty} \leq 2\|a\|_{\infty}/(n-2) +  \|a\|_1/(n-1)(n-2)$, and the proof follows.
\end{proof}

\section{Proofs of Corollaries~\ref{cor:ldp-cont-func} and \ref{cor:partition-function}} \label{sec:proof-cor}
\subsection{Large deviation for continuous functionals} \label{sec:ldp-cont-func}
In this section, we prove Corollary~\ref{cor:ldp-cont-func}, leveraging the general techniques used in \cite[Section 3]{CV11} and \cite[Section 3.2]{DL18}. 
\begin{proof}[Proof of Corollary~\ref{cor:ldp-cont-func}~(1)]
Let $\Gamma_{\geq r} = \{\tW : \tau (\tW) \geq r\}$. This is a closed set, since $\tau$ is continuous.  Recall that  $\sW_0=\{W\in \sW: \degree_{\tW} = \mu_D\}$ and $\tsW_0=\{\tW\in \tsW: W\in \sW_0\}$. $\tsW_0$ is also a closed set by \eqref{bound-prokhorov-metric}.
Also, 
\begin{eq}\label{eq:inf-tau-expr}
\phi_\tau (D,r) = \inf_{\tW\in \Gamma_{\geq r} \cap \tsW_0} J_{W_D}(\tW). 
\end{eq}
First, note that $J_{W_D} (\tW) = 0$ if and only if $\delcut(\tW,\tW_D) = 0$, which follows directly from Lemma~\ref{lem:rate-zero}. 
Thus, $\phi_\tau(D,r) = 0 $ for $r\in[0, l_{\tau}(D)]$. In this proof, let us henceforth assume $r\in (l_{\tau}(D),r_{\tau}(D)]$. It follows that $\Gamma_{\geq r} \cap \tsW_0 \neq \varnothing $ and $J_{W_D}$ is finite on $\Gamma_{\geq r} \cap \tsW_0 $. 
Consequently, $\phi_\tau(D,r)<\infty$.
For the strict positivity, since $\Gamma_{\geq r} \cap \tsW_0$ is compact and $J_{W_D}(\tW)$ is lower semi-continuous, the infimum in \eqref{eq:inf-tau-expr} is attained at some point $\tW^\star$. 
However, since $\tau (\tW^\star) \geq r> l_{\tau} (D)$, it must be that $\delcut (\tW_D,\tW^\star)>0$ and thus $J_{W_D}(\tW^\star)>0$. This shows that $\phi_\tau(D,r)$ is strictly positive. 

To prove the left-continuity of $\phi_{\tau}$, let $\alpha<\infty$ be such that $\phi_\tau(D,r')\leq \alpha$ for all $r'< r$.  
Recall that $F_{\star,r}\subset \Gamma_{\geq r} \cap \tsW_0$ is the set of minimizers of~\eqref{eq:inf-tau-expr}, which is shown to be non-empty above, and let   $\tW_r\in F_{\star,r}$.
Note that $J_{W_D}(\tW_{r'})\leq \alpha$, $\tau(\tW_{r'}) \geq r'$, and further, $\{\tilde{W}_{r'}:r'<r\}$ is precompact in $(\tsW,\delcut)$.
Take a subsequence along which as $r' \nearrow r$, $\tW_{r'} \to \tW$ in $(\sW,\delcut)$.
Then, by the lower semi-continuity of $J_{W_D}$, $J_{W_D}(\tW)\leq \alpha$, and by the continuity of $\tau$, $\tau(\tW) \geq r$. Thus $\phi_\tau(D,r) \leq \alpha$. This proves the left-continuity of $\phi_\tau(D,\cdot)$.

\end{proof}

%

\begin{proof}[Proof of Corollary~\ref{cor:ldp-cont-func}~(2)]
Let $\Gamma_{> r} = \{\tW : \tau (\tW) > r\}$. Then  Theorem~\ref{thm:ldp-given-degree} yields,
\begin{eq}
 - \lim_{r' \searrow r}\phi_\tau (D,r) &= - \inf_{\tW\in \Gamma_{>r}} J_D(\tW)\leq \liminf_{n\to\infty} \frac{1}{n^2} \log \PR(\tau_{n,\bd} >r) \\
 &\leq \limsup_{n\to\infty} \frac{1}{n^2} \log \PR(\tau_{n,\bd} \geq r) \leq - \phi_\tau (D,r). 
\end{eq}
Thus, if $r$ is a right-continuity point of $\phi_\tau (D,\cdot) $, then all the inequalities above hold with equality and the proof follows.
\end{proof}

\begin{proof}[Proof of Corollary~\ref{cor:ldp-cont-func}~(3)]
Let $\alpha = \phi_\tau (D,r)$. Recall that $\tbB(\tW,\varepsilon) $ denotes the $\varepsilon$ ball around~$\tW$ in $(\tsW, \delcut)$. Define $\Gamma_{r,\varepsilon} = \Gamma_{\geq r} \cap \big(\cap_{\tW \in F_{\star,r}} \tbB(\tW,\varepsilon)^c\big)$.
Note that
\begin{eq}
 \{\delcut(W^{\sss G_{n,\bd}},F_{\star,r}) \geq \varepsilon \text{ and }\tau_{n,\bd} \geq r\} = \{W^{\sss G_{n,\bd}}\in \Gamma_{r,\varepsilon}\}.
\end{eq}
It is enough to show that 
\begin{eq}
 \limsup_{n\to\infty}\frac{1}{n^2} \log \PR(W^{\sss G_{n,\bd}}\in \Gamma_{r,\varepsilon}) < -\alpha.
\end{eq}
Since $\Gamma_{r,\varepsilon}$ is a closed set, using Theorem~\ref{thm:ldp-given-degree}, it is enough to show that $\inf_{\tW \in \Gamma_{r,\varepsilon}} J_D(\tW) \leq \alpha$ yields a contradiction. 
Now, since $\Gamma_{r,\varepsilon}$ is compact and $J_D$ is lower semi-continuous, $J_D(\tW_r) \leq \alpha$ for some $\tW_r\in \Gamma_{r,\varepsilon}$.
Further,  
\begin{eq}
 F_{\star,r} = \Gamma_{\geq r} \cap \{\tW: J_D(\tW) \leq \alpha\},
\end{eq}so that $\tW_r \in F_{\star,r}$. Together with $\tW_r\in \Gamma_{r,\varepsilon}$, this yields a contradiction.
\end{proof}

\subsection{Convergence of the microcanonical partition function}
We now complete the proof of Corollary~\ref{cor:partition-function} in this section. We first need the following lemma: 
\begin{lemma}\label{lem:graph-count}
Recall that $\cG_{n,\bd}$ is the space of graphs with degree sequence $\bd$. Under {\rm Assumption~\ref{assmp:degree}}, as $n\to\infty$,  
\begin{eq}
\frac{1}{n^2} \log |\cG_{n,\bd}| \to h_{e}(W_D) = -\int_0^1 \beta(x)D(x) \dif x + \frac{1}{2}\int_{[0,1]^2} \log(1+\e^{\beta(x)+\beta(y)})\dif x \dif y,
\end{eq}where $h_e$ is defined in \eqref{defn:entropy-single}, and $\beta$ is given by {\rm Proposition~\ref{prop:graphon-limit}}.
\end{lemma}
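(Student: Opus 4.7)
The plan is to express $|\cG_{n,\bd}|$ explicitly in terms of the beta-model probabilities $\hat{p}_{ij}$ introduced in Section~\ref{sec:CDS-facts}, and then pass to the limit. The crucial structural observation is that the beta-model is an exponential family with the degree sequence as sufficient statistic, so \emph{every} graph $G \in \cG_{n,\bd}$ has the same probability under $\hat{G}_n$. A direct calculation gives
\begin{eq}\label{eq:beta-prob}
\PR(\hat{G}_n = G) = \frac{\exp\bigl(\sum_{i\in [n]} \hat{\beta}_i d_i\bigr)}{\prod_{i<j}\bigl(1+\e^{\hat{\beta}_i+\hat{\beta}_j}\bigr)},\qquad \forall G\in \cG_{n,\bd},
\end{eq}
since the edge indicator contributions combine to $\exp\bigl(\sum_i \hat{\beta}_i d_i\bigr)$ on the numerator and the normalizers $(1+\e^{\hat{\beta}_i+\hat{\beta}_j})^{-1}$ on the denominator. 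Summing \eqref{eq:beta-prob} over $\cG_{n,\bd}$ gives
\begin{eq}\label{eq:count-formula}
|\cG_{n,\bd}| = \PR(\hat{G}_n \in \cG_{n,\bd})\cdot \prod_{i<j}\bigl(1+\e^{\hat{\beta}_i+\hat{\beta}_j}\bigr)\cdot \exp\Bigl(-\sum_{i\in [n]} \hat{\beta}_i d_i\Bigr).
\end{eq}

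Taking logarithms in \eqref{eq:count-formula} and dividing by $n^2$, the first factor contributes $o(1)$: the probability is trivially at most $1$, while Lemma~\ref{lem:CDS-counting} (equivalently \eqref{eq:prob-simple-lb}) yields $\log \PR(\hat{G}_n \in \cG_{n,\bd}) \geq -n^{7/4}$, which is $o(n^2)$. The remaining work is to identify the limits of
\begin{eq}
\frac{1}{n^2}\sum_{i<j}\log\bigl(1+\e^{\hat{\beta}_i+\hat{\beta}_j}\bigr) \qquad\text{and}\qquad \frac{1}{n^2}\sum_{i\in [n]}\hat{\beta}_i d_i.
\end{eq}
Rewriting these as integrals using the step functions $\beta_n$ and $D_n$ from Section~\ref{sec:CDS-facts}, the first equals $\tfrac12\int_{[0,1]^2}\log(1+\e^{\beta_n(x)+\beta_n(y)})\dif x\dif y + O(1/n)$, and the second equals $\int_0^1 \beta_n(x) D_n(x)\dif x$. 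Since $\|\hat{\bld{\beta}}\|_\infty\leq C$, the softplus map $u\mapsto \log(1+\e^u)$ is Lipschitz on the range of $\beta_n(x)+\beta_n(y)$, so \eqref{eq:graphon-l1-conv} gives convergence to $\tfrac12\int \log(1+\e^{\beta(x)+\beta(y)})\dif x\dif y$. Similarly, a triangle inequality bound $|\int\beta_n D_n - \int \beta D| \leq C\|\beta_n-\beta\|_{L_1} + \|\beta\|_\infty\|D_n-D\|_{L_1}$ together with Assumption~\ref{assmp:degree}(1) and \eqref{eq:graphon-l1-conv} yields convergence to $\int_0^1 \beta(x) D(x)\dif x$.

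Combining these, $\tfrac{1}{n^2}\log|\cG_{n,\bd}|$ converges to $-\int_0^1 \beta(x) D(x)\dif x + \tfrac12\int_{[0,1]^2}\log(1+\e^{\beta(x)+\beta(y)})\dif x\dif y$, which is exactly the claimed formula. The equality with $h_e(W_D)$ follows from a direct calculation using $W_D(x,y)=\e^{\beta(x)+\beta(y)}/(1+\e^{\beta(x)+\beta(y)})$: the identities $\log W_D = \beta(x)+\beta(y)-\log(1+\e^{\beta(x)+\beta(y)})$ and $\log(1-W_D)=-\log(1+\e^{\beta(x)+\beta(y)})$ reduce $h_e(W_D)$ to the claimed expression after using $\int W_D(x,y)\dif y = D(x)$.

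The only non-routine input is the exchangeability identity \eqref{eq:beta-prob}, which is the reason we chose the beta-model $\hat{G}_n$ as an auxiliary measure in the first place; everything else is a standard passage to the limit, for which the uniform bound $\|\hat{\bld{\beta}}\|_\infty \leq C$ and the $L_1$-convergence statements from \cite{CDS11} make dominated/bounded convergence applicable without further care.
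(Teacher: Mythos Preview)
Your proof is correct and follows essentially the same route as the paper: both exploit the exponential-family identity \eqref{eq:beta-prob} for the beta-model to write $|\cG_{n,\bd}|$ as the product of $\PR(\hat{G}_n\in\cG_{n,\bd})$ with explicit terms in $\hat{\bld{\beta}}$, use Lemma~\ref{lem:CDS-counting} to kill the probability factor, and then pass to the limit via the $L_1$ convergence $\beta_n\to\beta$, $D_n\to D$ and the boundedness $\|\hat{\bld{\beta}}\|_\infty\leq C$. Your Lipschitz/softplus and triangle-inequality justifications are slightly more explicit than the paper's, but the argument is the same.
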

\begin{proof}
Recall the definitions of $\hat{\bld{\beta}}$, $\hat{p}_{ij}$, $\hat{G}_n$, $W_{n,\bd}$, $D_n$ and $\beta_n$ from Section~\ref{sec:CDS-facts}.  
Note that  
\begin{eq}
\PR(\hat{G}_n = G) = \frac{\e^{\sum_{i\in [n]}\hat{\beta}_id_i }}{ \prod_{i<j} (1+ \e^{\hat{\beta}_i + \hat{\beta}_j})}, \qquad G\in \cG_{n,\bd}.
\end{eq}
Thus, if $\bld{d}(\hat{G}_n)$ denotes the degree sequnce of $\hat{G_n}$, then
\begin{eq} \label{eq:deg-partition}
\PR(\bld{d}(\hat{G}_n) = \bld{d}) = |\mathcal{G}_{n,\bd}| \frac{\e^{\sum_{i\in [n]}\hat{\beta}_id_i }}{ \prod_{i<j} (1+ \e^{\hat{\beta}_i + \hat{\beta}_j})}. 
\end{eq}
Now, using \eqref{eq:graphon-l1-conv},  $\beta_n \to \beta$ in $L_1$ and therefore
\begin{eq}
&\frac{1}{n^2} \log \prod_{i<j} (1+ \e^{\hat{\beta}_i + \hat{\beta}_j}) = 
\frac{1}{n^2}\sum_{i<j} \log (1+ \e^{\hat{\beta}_i + \hat{\beta}_j}) \\
& \quad = \frac{1}{2}\int_{[0,1]^2} \log(1+\e^{\beta_n(x)+\beta_n(y)})  \dif x \dif y - \frac{1}{n^2}\sum_{i\in [n]} \log (1+\e^{2\hat{\beta}_i}) \\
& \quad \to \frac{1}{2}\int_{[0,1]^2} \log(1+\e^{\beta(x)+\beta(y)})\dif x \dif y,
\end{eq}where the second term in the third equality goes to zero by dominated convergence theorem.
Moreover, using the fact that $D_n\to D$ in $L_1$ from Assumption~\ref{assmp:degree}, and that $d_i < n$, $\|\hat{\bld{\beta}}\|_{\infty} \leq C$,  it follows that
\begin{eq}
\frac{1}{n^2} \sum_{i\in [n]} \hat{\beta}_i d_i = \int_0^1 \beta_n(x)D_n(x) \dif x \to \int_0^1 \beta(x)D(x) \dif x.
\end{eq}
Now,
\begin{eq}\label{eq:entropy-simplification}
&h_e(W_D) \\
&= - \frac{1}{2} \int_{[0,1]^2} \big(W_D(x,y) \log(W_D(x,y)) + (1-W_{D}(x,y)) \log (1-W_{D}(x,y)) \big)\dif x \dif y \\
& = -\int_0^1 \beta(x)D(x) \dif x + \frac{1}{2}\int_{[0,1]^2} \log(1+\e^{\beta(x)+\beta(y)})\dif x \dif y.
\end{eq}
Now, turning back to \eqref{eq:deg-partition}, let us recall from Lemma~\ref{lem:CDS-counting} that $\PR(\bld{d}(\hat{G}_n) = \bld{d})$ lies in~$(e^{-n^{7/4}} ,1)$. Thus, 
\begin{eq}
\frac{1}{n^2} \log |\cG_{n,\bd}|  = -\frac{1}{n^2} \sum_{i\in [n]} \hat{\beta}_i d_i + \frac{1}{n^2} \log \prod_{i<j} (1+ \e^{\hat{\beta}_i + \hat{\beta}_j}) +o(1) \to h_e(W_D),
\end{eq}where the last step follows from \eqref{eq:entropy-simplification}.
The proof is now complete.

\end{proof}


\begin{proof}[Proof of Corollary~\ref{cor:partition-function}]  
We identify graphs with the corresponding empirical graphons--- this naturally embeds $\cG_{n,\bd}$ into the space $\tsW$.  The image of $\cG_{n,\bd}$ under this embedding map is henceforth denoted as $\tilde{\cG}_{n,\bd}$. 
For any $\tA \subseteq \tsW$, define $\tA_n = \tA \cap \tilde{\cG}_{n,\bd}$, so that $|\tA_n| <\infty$ for all $n$. 
Observe that 
\begin{eq}
\tPRD(\tA) = \frac{|\tA_n|}{|\cG_{n,\bd}|}.
\end{eq}
Therefore, using Theorem~\ref{thm:ldp-given-degree} together with Lemma~\ref{lem:graph-count},  for any  closed set $\tF \subset \tsW$ and open set $\tU \subset \tsW$,  
\begin{gather}
\limsup_{n\to\infty} \frac{1}{n^2} \log |\tF_n| \leq - \inf_{\tW\in \tF} J_D(\tW) + h_{e}(W_D) ,\label{ub-card-1}\\ 
\liminf_{n\to\infty} \frac{1}{n^2} \log |\tU_n| \geq - \inf_{\tW\in \tU} J_D(\tW) + h_{e}(W_D)\label{lb-card-2}.
\end{gather}
Fix $\varepsilon > 0$. Since $\tau$ is bounded, there exists $(a_i)_{i=1}^k$ such that the range of $\tau$ is a subset of $\cup_{i\in [k]}[a_i,a_i+\varepsilon]$. 
Now, let $\tF^{a_i}:= \tau^{-1} ([a_i,a_i+\varepsilon])$, which is closed due to the continuity of~$\tau$. Thus,
\begin{eq}
\e^{n^2 Z_{n,\tau}} \leq \sum_{i\in [k]} \e^{n^2 (a_i+\varepsilon)} |\tF^{a_i}| \leq k \max_{i\in [k]}  \e^{n^2 (a_i+\varepsilon)} |\tF^{a_i}|.
\end{eq}
Thus, \eqref{ub-card-1} implies that
\begin{eq}
\limsup_{n\to\infty} Z_{n,\tau} &\leq \max_{i\in [k]} \Big(a_i +\varepsilon - \inf_{\tW\in \tF^{a_i}} J_D(\tW) \Big) + h_{e}(W_D) \\
& \leq \varepsilon + \max_{i\in [k]} \sup_{\tW\in \tF^{a_i}} \big(\tau (\tW) - J_{D}(\tW)\big)+ h_{e}(W_D) \\
&= \varepsilon + \sup_{\tW\in \tsW} \big(\tau (\tW) - J_{D}(\tW)\big)+ h_{e}(W_D),
\end{eq}
where in the second step we have used the fact that $\tau(\tW) \geq a$ for all $\tW\in \tF^{a_i}$. 
For the lower bound, let $\tU^{b_i} = \tau^{-1} ((b_i,b_i+\varepsilon))$ for $i \leq l$ be such that $\cup_{i\in [l]}(b_i,b_i+\varepsilon)$ covers the range of $\tau$. An identical computation to above yields that
\begin{eq}
\liminf_{n\to\infty} Z_{n,\tau} \geq - \varepsilon + \sup_{\tW\in \tsW} \big(\tau (\tW) - J_{D}(\tW)\big)+ h_{e}(W_D).
\end{eq}
The proof of \eqref{eq:limit-partition-function} now follows by taking $\varepsilon \to 0$.
To see \eqref{eq:limit-number-graph}, the continuity of $\tau$, together with \eqref{ub-card-1} implies that
\begin{eq}
\limsup_{n\to\infty} \frac{1}{n^2} \log N_{n,\tau} (\bd,r) \leq - \phi_\tau (D,r) + h_e(W_D).
\end{eq}
Also, $N_{n,\tau}(\bd,r)$ is at least the number of graphs with degree sequence $\bd$ and $\tau (\tW) >r$. 
Thus, \eqref{lb-card-2} implies that
\begin{eq}
\liminf_{n\to\infty} \frac{1}{n^2} \log N_{n,\tau} (\bd,r) \geq  - \lim_{r' \searrow r}\phi_\tau (D,r) + h_e(W_D).
\end{eq}
The proof of \eqref{eq:limit-number-graph} is now complete using the right continuity of $\phi_{\tau} (D,\cdot)$ at $r$.
\end{proof}

\paragraph{Acknowledgements.} The authors gratefully thank an anonymous referee for an extremely thorough review, which has significantly improved the exposition of this paper. This work was initiated  during the BIRS workshop  ``Spin glasses and Related topics (2018)". 
The authors thank Amir  Dembo, Christian Borgs and Jennifer Chayes for motivating this research direction, and 
Julia Gaudio and Samantha Petti for pointing out minor errors in an earlier version of this paper.

\bibliographystyle{apa}
\bibliography{references}
\appendix 


\end{document}